	\pgfplotsset{compat=1.12}
\theoremstyle{plain}
	\newtheorem{theorem}{Theorem}[section]
	\newtheorem{proposition}[theorem]{Proposition}
	\newtheorem{corollary}[theorem]{Corollary}
\newcommand{\e}{\operatorname{e}}
\newcommand{\rk}{\operatorname{rank}}
\newcommand{\ii}{\mathbf i \,}
\def\N{\mathbb N}
\def\R{\mathbb R}
\def\E{\mathcal E}
\def\I{\mathcal I}
\let\oldsqrt\sqrt
\def\sqrt{\mathpalette\DHLhksqrt}
\def\DHLhksqrt#1#2{%
\setbox0=\hbox{$#1\oldsqrt{#2\,}$}\dimen0=\ht0
\advance\dimen0-0.2\ht0
\setbox2=\hbox{\vrule height\ht0 depth -\dimen0}%
{\box0\lower0.4pt\box2}}
\begin{document}

\title{Measure-geometric Laplacians for discrete distributions}

\author{M.~Kesseb\"ohmer}
\address{Fachbereich 3 Mathematik, Universit\"at Bremen, Bibliothekstr. 1, 28359 Bremen, Germany.}
%

\author{T.~Samuel}
\address{Mathematics Department, California Polytechnic State University, San Luis Obispo, CA, USA.}
%

\author{H.~Weyer}
\address{Fachbereich 3 Mathematik, Universit\"at Bremen, Bibliothekstr. 1, 28359 Bremen, Germany.}
%

\subjclass[2010]{35P20; 42B35; 47G30.}
\keywords{measure-geometric Laplacians; spectral asymptotics.}

\begin{abstract}
In 2002 Freiberg and Z\"ahle introduced and developed a harmonic calculus for measure-geometric Laplacians associated to continuous distributions. We show their theory can be extended to encompass distributions with finite support and give a matrix representation for the resulting operators. In the case of a uniform discrete distribution we make use of this matrix representation to explicitly determine the eigenvalues and the eigenfunctions of the associated Laplacian.
\end{abstract}

\maketitle

\section{Introduction}\label{sec:introduction}

Motivated by the fundamental theorem of calculus, and based on the works of Feller \cite{Fe57} and Kac and Kre\u{\i}n \cite{KK68}, given an atomless Borel probability measure $\mu$ supported on a compact subset of $\R$, Freiberg and Z\"{a}hle \cite{FZ02} introduced a \mbox{measure-geometric} approach to define a first order differential operator $\nabla^{\mu}$ and a second order differential operator \mbox{$\Delta^{\mu, \mu} \coloneqq \nabla^{\mu} \circ \nabla^{\mu}$}, with respect to $\mu$.  In the case that $\mu$ is the Lebesgue measure, it was shown that $\nabla^{\mu}$ coincides with the weak derivative.  Moreover, a harmonic calculus for $\Delta^{\mu, \mu}$ was developed and, when $\mu$ is a self-similar measure supported on a Cantor set, the authors proved the eigenvalue counting function of $\Delta^{\mu, \mu}$ is comparable to the square-root function. In \cite{KSW16} for continuous measures the exact eigenvalues and eigenfunctions were obtained and it was shown the eigenvalues do not depend on the given measure. Arzt \cite{A15b} has also considered the \mbox{Kre\u{\i}n-Feller} operator $\Delta^{\mu, \Lambda} \coloneqq \nabla^{\mu} \circ \nabla^{\Lambda}$, where $\mu$ denotes a continuous Borel probability measure and $\Lambda$ denotes the Lebesgue measure, see \cite{Fr05, Fu87} for further results in this direction.

Here, we show this framework can be extended to included purely atomic measures $\mu$. Unlike in the case when one has a measure with a continuous distribution function (see for instance \cite{FZ02,KSW16}), we prove the operators $\nabla^{\mu}$ and $\Delta^{\mu,\mu}$ are no longer symmetric. To circumvent this problem, we consider the operator $\nabla^{\mu}$, its adjoint $(\nabla^{\mu})^{*}$ and define the \mbox{$\mu$-Laplacian} to be $\Delta^{\mu} = -(\nabla^{\mu})^{*} \circ \nabla^{\mu}$. We give matrix representations for these operators, noting they coincide with the normalised Laplacian matrix of a cycle graph \cite{Bi93} and resemble a discretisation of a one-dimensional Laplacian on a non-uniform grid \cite{Le07}. Further, we discuss properties of the eigenvalues and eigenfunctions of the operator $\Delta^{\mu}$.  In particular, we show the eigenfunctions for distributions with finite support are not necessarily of the form $f_{\kappa}^{\mu}(\cdot) \coloneqq \sin(\pi \kappa F_{\mu}(\cdot))$ or $g_{\kappa}^{\mu}(\cdot) \coloneqq \cos(\pi \kappa F_{\mu}(\cdot))$, for $\kappa \in \R \setminus \{ 0 \}$ and where $F_{\mu}$ denotes the distribution function of $\mu$.  This differs from the case of continuous distributions, see \cite{FZ02,KSW16,Z05}. Additionally, in the case that $\mu$ is a uniform discrete probability distribution we explicitly determine the eigenvalues and eigenfunctions of $\Delta^{\mu}$.

\newpage

\subsection*{Outline} 

In Section \ref{sec:definition_and_properties} we present necessary definitions and basic properties of $\nabla^{\mu}$, $(\nabla^{\mu})^{*}$ and $\Delta^{\mu}$ and give matrix representations for these operators.  In Section \ref{sec:eigenvalues} we prove general results concerning the spectral properties of $\Delta^{\mu}$.  We conclude with Section \ref{sec:examples}, where explicit computations are carried out when $\mu$ is a uniform discrete probability distribution.

\section{Definitions and analytic properties of \texorpdfstring{$\Delta^{\mu}$}{TEXT}}\label{sec:definition_and_properties}

Set $\I \coloneqq [0,1]$ and let $\delta_z$ denote the Dirac-measure at $z$, for some fixed $z \in \I$. Let $\mu$ denote the probability measure $\mu \coloneqq \sum_{i=1}^{N}\alpha_i \delta_{z_i}$, where $N \in \N$, $0 \leq z_{1} < z_{2} < \dots < z_{N} < 1$ and $\alpha_{i} > 0$, for $i\in\{1,\dots,N\}$. We denote the set of real-valued square-integrable functions on $\I$ by $\mathfrak{L}^{2}_{\mu} = \mathfrak{L}^{2}_{\mu}(\I)$, we define $\mathcal{N}_{\mu}(\I)$ to be set of $\mathfrak{L}^{2}_{\mu}$-functions which are constant zero $\mu$-almost everywhere, and we let $L^{2}_{\mu} = L^{2}_{\mu}(\I) \coloneqq \mathfrak{L}^{2}_{\mu}(\I) \setminus \mathcal{N}_{\mu}(\I)$. The latter space is a finite-dimensional inner product space with inner product $\langle \cdot, \cdot \rangle$ given by 
\begin{align*}
\langle f, g \rangle = \langle f, g \rangle_{\mu} \coloneqq \sum_{i=1}^{N} \alpha_i  f(z_i)  g(z_i).
\end{align*}
We define the set of \textit{$\mu$-differentiable functions on $\I$ with periodic boundary conditions} by
\begin{align}\label{Def:D1line}
\begin{aligned}
\mathscr{D}_{\mu}^{1} = \mathscr{D}_{\mu}^{1}(\I) \coloneqq \bigg\{ f \in \mathfrak{L}^2({\mu}) \colon \text{there exists} \, f' \in L^2_{\mu} \; \text{such that} \; f(0) = f(1) \; \text{and}& \\
f(x) = f(0) + \int \mathds{1}_{[0,x)} f' \, \mathrm{d}\mu \; \text{for all} \; x \in \I &
\bigg\},
\end{aligned}
\end{align}
where we understand $[0,0) = \emptyset$. Note, the function $f'$ defined in \eqref{Def:D1line} is unique in $L^2_{\mu}$.  Since $f(0) = f(1) = f(0) + \int \mathds{1}_{[0,1)} f' \, \mathrm{d}\mu$, it follows that
\begin{align}\label{cor:int_derivative_0}
 \int \mathds{1}_{[0,1)} f' \, \mathrm{d}\mu = 0.
\end{align}
For $f \in \mathscr{D}_{\mu}^{1}$ and $f'$ as in \eqref{Def:D1line}, the operator $\nabla^{\mu} \colon \mathscr{D}_{\mu}^{1} \to L^2_{\mu}$ defined by $\nabla^{\mu}f \coloneqq f'$ is called the \mbox{\textit{${\mu}$-derivative}}.  Linearity of the integral yields $\nabla^{\mu}$ is linear on $\mathscr{D}_{\mu}^{1}$.  As $\mu$ is a linear combination of Dirac measures, we can reformulate the defining equation of $\nabla^{\mu}f$ given in \eqref{Def:D1line} by
\begin{align}\label{eq:def_nabla_f}
f(x) = f(0) + \sum_{\substack{i \in \{1, \dots, N \}\\z_{i} < x}} \alpha_{i}\nabla^{\mu}f(z_i),
\end{align}
where $f \in \mathscr{D}_{\mu}^{1}$ and $x \in \I$. Thus, if $\mu$ is a Dirac measure, that is $N = 1$, then \eqref{cor:int_derivative_0} becomes $\alpha_1\nabla^{\mu}f(z_1) = 0$. Hence, from \eqref{eq:def_nabla_f}, it follows that $\mathscr{D}_{\mu}^{1}$ is the set of constant functions. In other words, the operator $\nabla^{\mu}$ is the null-operator, and so, from here on we assume $N \geq 2$.

The periodic boundary conditions and \eqref{eq:def_nabla_f} together imply that a function $f \in \mathscr{D}_{\mu}^{1}$ is piecewise constant; namely, $f\vert_{[0,z_{1}] \cup (z_{N},1]}$ and $f\vert_{(z_i,z_{i+1}]}$ are constant, for $i \in \{ 1, \dots, N-1 \}$.  Therefore, $f$ is uniquely determined by the vector $(f(z_1),\dots,f(z_N))^{\top}$, and thus, there exists an $N \times N$-matrix $A$ with
\begin{align*}
A ( f(z_{1}), \dots, f(z_{N}))^{\top} = (\nabla^{\mu} f(z_{1}), \dots, \nabla^{\mu} f(z_{N}))^{\top}\hspace{-0.5em}.
\end{align*}
From \eqref{eq:def_nabla_f} and the fact that $f(1)=f(0)=f(z_1)$, we have  
\begin{align*}
	\nabla^{\mu}f(z_N) &= \cfrac{f(z_{1})-f(z_{N})}{\alpha_{N}} \quad \text{and} \quad
	\nabla^{\mu}f(z_{n})=\cfrac{f(z_{n+1})-f(z_{n})}{\alpha_{n}},
\end{align*}
for $n\in\{1,\dots,N-1\}$, and hence,
\begin{align*}
	A =
		\begin{pmatrix}
			-\alpha_{1}^{-1} &  \alpha_{1}^{-1} & 0 & \cdots & 0 & 0 & 0\\[0.25em]
			0 & -\alpha_{2}^{-1} &  \alpha_{2}^{-1} & \cdots & 0 & 0 & 0\\[0.25em]
			0 & 0 & - \alpha_{3}^{-1}  & \cdots & 0 & 0 & 0\\[0.25em]
			\vdots & \vdots & \vdots &  \ddots & \vdots & \vdots & \vdots\\
			0 & 0 & 0 &  \cdots & -\alpha_{N - 2}^{-1} & \alpha_{N-2}^{-1} & 0\\[0.25em]
			0 & 0 & 0 & \cdots & 0 & -\alpha_{N-1}^{-1} & \alpha_{N-1}^{-1}\\[0.25em]
			\alpha_{N}^{-1} & 0 & 0 & \cdots & 0 & 0 & -\alpha_{N}^{-1}
		\end{pmatrix}.
\end{align*}
Since $\sum_{i=1}^{N} \alpha_{i} g^2(z_i) < \infty$, for all $g \in \mathscr{D}_{\mu}^{1}$, there exists a natural embedding $\pi \colon \mathscr{D}_{\mu}^{1} \to L_{\mu}^{2}$. In fact, from the matrix representation given above it follows that $\pi(\mathscr{D}_{\mu}^{1}) = L_{\mu}^{2}$. In other words, every equivalence class of  $L_{\mu}^{2}$ has a $\mu$-differentiable representative, and so, from here on we will not distinguish between $\mathscr{D}_{\mu}^{1}$ and $\pi(\mathscr{D}_{\mu}^{1})$.

Notice, $A$ is not self-adjoint, and $A^2$ is self-adjoint if and only if $N = 2$ and $\alpha_{1} = \alpha_{2}$. Hence, the operators $\nabla^{\mu}$ and $\Delta^{\mu} \coloneqq \nabla^{\mu} \circ \nabla^{\mu}$ are not in general self-adjoint.  To obtain a self-adjoint operator we follow the program of Kigami \cite{Ki93, Ki01, Ki03}, and Kigami and Lapidus \cite{KL01}, and use the bilinear form $\E$ defined by 
\begin{align*}
\E(f,g) = \E^{\mu}(f,g) \coloneqq \langle \nabla^{\mu}f,\nabla^{\mu}g \rangle,
\end{align*}
for $f, g \in \mathscr{D}_{\mu}^{1}$.  We refer to $\E$ as the \textit{$\mu$-energy form}.
 
\begin{theorem}
The $\mu$-energy form $\E$ is a Dirichlet form.
\end{theorem}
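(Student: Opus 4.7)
The plan is to verify the four defining properties of a Dirichlet form on $L^2_\mu$: (i) bilinearity and symmetry, (ii) positivity, (iii) closedness, and (iv) the Markov (unit-contraction) property. Since the preceding discussion identifies $\mathscr{D}^1_\mu$ with $L^2_\mu$, the domain question and closedness are cheap, and the real content lies in (iv).

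First I would note that bilinearity of $\E$ follows from linearity of $\nabla^\mu$ (already established) and bilinearity of $\langle\cdot,\cdot\rangle_\mu$; symmetry is immediate because $\langle\cdot,\cdot\rangle_\mu$ is symmetric. Positivity is also immediate: $\E(f,f) = \langle\nabla^\mu f,\nabla^\mu f\rangle_\mu = \|\nabla^\mu f\|_\mu^2 \geq 0$. Since $L^2_\mu$ is finite-dimensional of dimension $N$ and $\mathscr{D}^1_\mu = L^2_\mu$, the domain is dense and the graph norm $f \mapsto (\E(f,f)+\|f\|^2_\mu)^{1/2}$ is a norm on a finite-dimensional vector space, hence complete. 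This gives closedness for free.

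The key step is the Markov property. Using the explicit formulas for $\nabla^\mu f$ already derived from \eqref{eq:def_nabla_f}, I would write out
\begin{align*}
\E(f,f) \;=\; \sum_{i=1}^{N} \alpha_i (\nabla^\mu f(z_i))^2 \;=\; \sum_{i=1}^{N-1} \frac{(f(z_{i+1}) - f(z_i))^2}{\alpha_i} + \frac{(f(z_1) - f(z_N))^2}{\alpha_N}.
\end{align*}
Given $f \in \mathscr{D}^1_\mu$, set $\bar f \coloneqq (0 \vee f) \wedge 1$; since every element of $L^2_\mu = \mathscr{D}^1_\mu$ is uniquely described by its values at $z_1,\dots,z_N$, the function $\bar f$ again lies in $\mathscr{D}^1_\mu$. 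The unit contraction $t\mapsto (0\vee t)\wedge 1$ is $1$-Lipschitz on $\R$, so $|\bar f(z_{i+1}) - \bar f(z_i)| \leq |f(z_{i+1}) - f(z_i)|$ for each $i$, and likewise $|\bar f(z_1) - \bar f(z_N)| \leq |f(z_1) - f(z_N)|$. Substituting these termwise bounds into the formula above, with the positive weights $\alpha_i^{-1}$, yields $\E(\bar f,\bar f) \leq \E(f,f)$, which is the Markov property.

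I do not expect any genuine obstacle here: all four axioms reduce to elementary inequalities once the explicit diagonal-style expression for $\E(f,f)$ is in place. The only point requiring slight care is verifying that the truncation $\bar f$ still satisfies the periodic boundary condition $\bar f(0)=\bar f(1)$, but this follows at once from $f(0)=f(1)$ and the fact that the unit contraction is a pointwise function of $f$.
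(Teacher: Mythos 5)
Your proof is correct and follows essentially the same route as the paper: the Markov property is obtained exactly as there, by applying the $1$-Lipschitz unit contraction to the values $f(z_1),\dots,f(z_N)$ and comparing termwise in the weighted sum of squared differences $\E(f,f)=\sum_{i=1}^{N-1}\alpha_i^{-1}(f(z_{i+1})-f(z_i))^2+\alpha_N^{-1}(f(z_1)-f(z_N))^2$. The only cosmetic difference is in the closedness step, where you invoke completeness of a norm on a finite-dimensional space directly, while the paper runs the Cauchy-sequence argument explicitly; both rest on the same finite-dimensionality.
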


\begin{proof} 
The $\mu$-energy form is bilinear since the inner product is bilinear, $\nabla^{\mu}$ is linear and every equivalence class of  $L_{\mu}^{2}$ has a $\mu$-differentiable representative. The symmetry and the non-negativity of $\E$ follow from the properties of the inner product. For every $f \in \mathscr{D}_{\mu}^{1}$, the function $\hat{f} \colon \I \to \R$, defined by $\hat{f}(x) \coloneqq \min(\max(f(x),0),1)$, belongs to $\mathscr{D}_{\mu}^{1}$, and as 
\begin{align*}
\lvert \hat{f}(z_{i+1}) - \hat{f}(z_{i}) \rvert \leq \lvert f(z_{i+1}) - f(z_{i}) \rvert
\quad \text{and} \quad
\lvert \hat{f}(z_{1}) - \hat{f}(z_{N}) \rvert \leq \lvert f(z_{1}) - f(z_{N}) \rvert,
\end{align*}
it follows that $\E(\hat{f}, \hat{f}) \leq \E(f,f)$. The properties of $\langle \cdot, \cdot \rangle $ and $\E$ yield $\mathscr{D}_{\mu}^{1}$ equipped with $\langle \cdot, \cdot \rangle_{\E} \coloneqq \langle \cdot, \cdot \rangle + \E(\cdot, \cdot)$ is an inner product space. Now, for every Cauchy sequence $(f_{n})_{n \in \N}$ in $(\mathscr{D}_{\mu}^{1}, \langle \cdot, \cdot \rangle_{\E})$, we have both  $(f_{n})_{n \in \N}$ and $(\nabla^{\mu}f_{n})_{n \in \N}$ are Cauchy-sequences in $L_{\mu}^{2}$. Hence, there exist $\tilde{f_{0}}, \tilde{f_{1}} \in L_{\mu}^{2}$ with $\lim_{n\to \infty} \lVert f_n - \tilde{f_{0}} \rVert = 0$ and $\lim_{n\to \infty} \lVert \nabla^{\mu}f_n - \tilde{f_{1}} \rVert = 0$, where $\lVert f \rVert^2 \coloneqq \langle f,f \rangle$, for $f \in L_{\mu}^{2}$. Since $\nabla^{\mu}$ is linear, it is continuous, and so $\nabla^{\mu} \tilde{f_{0}} = \tilde{f_{1}}$.  This implies that $\lim_{n \to \infty}f_{n} = \tilde{f_{0}} \in \mathscr{D}_{\mu}^{1}$, with respect to $\langle \cdot, \cdot \rangle_{\E}$.
\end{proof}

Notice, since the analysis reduces to a finite dimensional vector space, the $\mu$-energy form is also a graph energy form as treated in \cite{KL12}.

We say that $f \in \mathscr{D}_{\mu}^{1}$ belongs to $\mathscr{D}_{\mu}^{2} = \mathscr{D}_{\mu}^{2}(\I)$, if there exists a $h \in L^2_{\mu}$, necessarily unique, such that $\E(f,g) = -\langle h,g \rangle$, for all $g \in \mathscr{D}_{\mu}^{1}$. We define the \textit{$\mu$-Laplacian} to be the operator $\Delta^{\mu} \colon \mathscr{D}_{\mu}^{2} \to L^2_{\mu}$ given by $\Delta^{\mu} f \coloneqq h$.  Indeed, for an arbitrary $g \in \mathscr{D}_{\mu}^{1}$, we observe
\begin{align}\label{eq:energy_identity}
\langle \nabla^{\mu}f,\nabla^{\mu}g \rangle  = -\langle \Delta^{\mu} f,g \rangle, \quad \text{and thus,} \quad \Delta^{\mu} = -\left(\nabla^{\mu}\right)^{*} \circ \nabla^{\mu}.
\end{align}
With this, we conclude $B \coloneqq -A^{\top}A$ is a matrix representation of $\Delta^{\mu}$; in fact, for $N=2$, 
\begin{align}\label{eq:matrix_N=2}
B = \begin{pmatrix}
	-\alpha_{1}^{-2} - \alpha_{2}^{-2} & \alpha_{1}^{-2} + \alpha_{2}^{-2}\\[0.25em]
	\alpha_{1}^{-2} + \alpha_{2}^{-2}  & -\alpha_{1}^{-2} - \alpha_{2}^{-2}
\end{pmatrix},
\end{align}
and, for $N \geq 3$, we have $B$ is the $N \times N$-matrix

\noindent\resizebox{\linewidth}{!}{%
$\begin{pmatrix}
-\alpha_{N}^{-2} - \alpha_{1}^{-2} & \alpha_{1}^{-2} & 0 & \cdots & 0 & 0 & \alpha_{N}^{-2}\\[0.25em]
\alpha_{1}^{-2} & -\alpha_{1}^{-2} - \alpha_{2}^{-2} &  \alpha_{2}^{-2} & \cdots & 0& 0 & 0 \\[0.25em]
0 & \alpha_{2}^{-2} & -\alpha_{2}^{-2} - \alpha_{3}^{-2} & \cdots & 0 & 0 & 0 \\[0.25em]
\vdots & \vdots & \vdots & \ddots & \vdots & \vdots & \vdots\\[0.25em]
0 & 0 & 0 & \cdots & -\alpha_{N-3}^{-2} - \alpha_{N-2}^{-2} & \alpha_{N-2}^{-2} & 0\\[0.25em]
0 & 0 & 0 & \cdots & \alpha_{N-2}^{-2} & -\alpha_{N-2}^{-2} - \alpha_{N-1}^{-2} &  \alpha_{N-1}^{-2}\\[0.25em]
\alpha_{N}^{-2} & 0 & 0 & \cdots & 0 & \alpha_{N-1}^{-2} & -\alpha_{N-1}^{-2} - \alpha_{N}^{-2}
\end{pmatrix}$.
}

\begin{theorem}\label{Thm:mixed_sym_non-pos}
The operator $\Delta^{\mu}$ is linear, self-adjoint and non-positive.
\end{theorem}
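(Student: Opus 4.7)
The plan is to deduce all three properties directly from the energy identity \eqref{eq:energy_identity} together with the symmetry and bilinearity of $\langle\cdot,\cdot\rangle$ and $\mathcal{E}$. Since the theorem has already provided the matrix representation $B=-A^\top A$, an alternative route is to read the three properties off from $B$; I would present the coordinate-free argument as the main line and mention the matrix version as a sanity check.

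First I would fix $f,g\in\mathscr{D}_{\mu}^{2}$ and use the defining relation $\mathcal{E}(f,g)=-\langle\Delta^{\mu}f,g\rangle$. Linearity of $\Delta^{\mu}$ would be argued as follows: for $f_1,f_2\in\mathscr{D}_{\mu}^{2}$ and $\lambda\in\R$, the bilinearity of $\mathcal{E}$ shows $\mathcal{E}(\lambda f_1+f_2,g)=\lambda\mathcal{E}(f_1,g)+\mathcal{E}(f_2,g)=-\langle\lambda\Delta^{\mu}f_1+\Delta^{\mu}f_2,g\rangle$ for every $g\in\mathscr{D}_{\mu}^{1}$; uniqueness of the element representing $\mathcal{E}(\,\cdot\,,\cdot\,)$ (already noted when $\mathscr{D}_{\mu}^{2}$ was introduced) then forces $\Delta^{\mu}(\lambda f_1+f_2)=\lambda\Delta^{\mu}f_1+\Delta^{\mu}f_2$, and so $\lambda f_1+f_2\in\mathscr{D}_{\mu}^{2}$ as well.

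For self-adjointness I would chain the energy identity with the symmetry of $\mathcal{E}$:
\begin{align*}
\langle\Delta^{\mu}f,g\rangle=-\mathcal{E}(f,g)=-\mathcal{E}(g,f)=\langle\Delta^{\mu}g,f\rangle=\langle f,\Delta^{\mu}g\rangle,
\end{align*}
where the last equality uses that $\langle\cdot,\cdot\rangle$ is symmetric and real-valued. For non-positivity the same identity with $g=f$ gives
\begin{align*}
\langle\Delta^{\mu}f,f\rangle=-\mathcal{E}(f,f)=-\lVert\nabla^{\mu}f\rVert^{2}\leq 0.
\end{align*}
Finally, because every element of $L^{2}_{\mu}$ admits a $\mu$-differentiable representative and the matrix representation $B=-A^\top A$ is already available, $\mathscr{D}_{\mu}^{2}$ is in fact all of $L^{2}_{\mu}$, so the above identities hold on the whole space; the equalities $B^\top=-(A^\top A)^\top=-A^\top A=B$ and $\langle Bv,v\rangle=-\lVert Av\rVert^{2}\leq 0$ recover the same conclusions matricially.

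There is no real obstacle here: the only thing to be slightly careful about is that the energy identity in \eqref{eq:energy_identity} is applied in both slots, so one needs the symmetry of $\mathcal{E}$ (itself inherited from the inner product) to switch the roles of $f$ and $g$. Once that is spelled out, linearity, self-adjointness and non-positivity all drop out of the same one-line manipulation.
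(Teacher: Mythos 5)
Your proposal is correct and follows essentially the same route as the paper: linearity from bilinearity of $\E$ (plus uniqueness of the representing element), self-adjointness from symmetry of $\E$ via the identity $\langle\Delta^{\mu}f,g\rangle=-\E(f,g)$, and non-positivity from \eqref{eq:energy_identity} with $g=f$. The paper states these three deductions in one line each, so your write-up is just a more detailed rendering of the same argument, with the matrix check as an optional extra.
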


\begin{proof}
Linearity follows from linearity of $\nabla^{\mu}$ and bilinearity of $\E$. Self-adjointness is a consequence of symmetry of $\E$.  An application of \eqref{eq:energy_identity} yields $\langle \Delta^{\mu} f,f \rangle = -\langle \nabla^{\mu}f,\nabla^{\mu}f \rangle \leq 0$, and hence, $\Delta^{\mu}$ is non-positive.
\end{proof}

The above demonstrates one can view the operator $\Delta^{\mu}$ as a Laplacian matrix of a weighted cycle graph, see \cite{Bi93,Ch97}. Indeed, the analytic results from spectral graph theory may be carried over to our setting, which would certainly be interesting to investigate and could lead to further results concerning $\Delta^{\mu}$.  Moreover, the matrix $B$ resembles the matrices appearing in the finite difference methods used in numerical analysis of ordinary differential equations\footnote{\;\parbox[t]{0.95\linewidth}{We would like to thank Paul Choboter, Maik Gr\"oger, Jens Rademacher and Alfred Schmidt for bringing this to our attention.}}, see \cite{Le07}.  We also observe the matrix representation of the operator $\Delta^{\mu}$ only depends on the order and the weighting of the atoms of $\mu$ and is independent on the distances between $z_{i}$ and $z_{j}$, for all $i, j \in \{ 1, \dots, N \}$.

\section{Spectral properties of \texorpdfstring{$\Delta^{\mu}$}{TEXT}}\label{sec:eigenvalues}

By definition of the matrix $B$, to find the eigenvalues and eigenfunctions of $\Delta^{\mu}$, it suffices to compute the eigenvalues and eigenvectors of $B$. 

\begin{proposition}\label{prop:lower_bound}
If $\lambda$ is an eigenvalue of $\Delta^{\mu}$, then $\lambda \in \R$ and $\displaystyle 2\hspace{-0.5em}\min_{i \in \{ 1, \dots, N\}} \hspace{-0.25em} B_{i,i} \leq \lambda \leq 0$.
\end{proposition}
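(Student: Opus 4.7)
The two easy parts are essentially recorded already. Realness of $\lambda$ is immediate from self-adjointness of $\Delta^{\mu}$ (Theorem \ref{Thm:mixed_sym_non-pos}), and $\lambda \le 0$ follows from the non-positivity established in the same theorem, since for a corresponding eigenvector $v\ne 0$ one has $\lambda \langle v,v\rangle = \langle \Delta^{\mu} v, v\rangle \le 0$.

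For the lower bound I would use Gershgorin's disk theorem applied to the matrix $B$ displayed above. The key structural observation is that for every row $i$ of $B$ (for both $N=2$ and $N\ge 3$) the off-diagonal entries are non-negative and their sum equals $-B_{i,i}=|B_{i,i}|$; concretely, for $2\le i\le N-1$ we have $\sum_{j\ne i}|B_{i,j}| = \alpha_{i-1}^{-2}+\alpha_{i}^{-2} = -B_{i,i}$, and the boundary rows $i=1$ and $i=N$ contribute $\alpha_{N}^{-2}+\alpha_{1}^{-2}$ and $\alpha_{N-1}^{-2}+\alpha_{N}^{-2}$ respectively (the $N=2$ case of \eqref{eq:matrix_N=2} is analogous). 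Hence every Gershgorin disk of $B$ has the form $\{z\in\mathbb{C}:|z-B_{i,i}|\le |B_{i,i}|\}$.

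By Gershgorin's theorem, any eigenvalue $\lambda$ of $B$ lies in at least one such disk, say the $i$-th. Since $\lambda\in\R$ and $B_{i,i}<0$, this gives
\begin{align*}
2B_{i,i} \;=\; B_{i,i} - |B_{i,i}| \;\le\; \lambda \;\le\; B_{i,i}+|B_{i,i}| \;=\; 0,
\end{align*}
and taking the minimum over $i$ yields $2\min_{i}B_{i,i}\le \lambda\le 0$, as required. The upper bound in this chain is consistent with the non-positivity argument above, so both halves of the statement are obtained.

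I do not foresee a real obstacle; the only point to be a little careful about is verifying the row-sum identity $\sum_{j\ne i}|B_{i,j}|=-B_{i,i}$ uniformly, including the wrap-around entries $B_{1,N}=B_{N,1}=\alpha_{N}^{-2}$ that encode the periodic boundary condition, and the separate $N=2$ formula from \eqref{eq:matrix_N=2}. Once this is in place, Gershgorin does all the work, and no detailed eigenvector computation is needed at this stage.
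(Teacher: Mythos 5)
Your proof is correct and follows essentially the same route as the paper: realness and non-positivity come from Theorem \ref{Thm:mixed_sym_non-pos}, and the lower bound comes from localising the eigenvalues of the explicit matrix $B$ via the identity $\sum_{j\ne i}\lvert B_{i,j}\rvert = -B_{i,i}$. The paper bounds the spectral norm by the column-sum norm where you invoke Gershgorin's disk theorem, but for this symmetric matrix the two estimates rest on the same row-sum computation and both yield $\lambda \ge 2\min_{i} B_{i,i}$.
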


\begin{proof}
Theorem \ref{Thm:mixed_sym_non-pos} together with the fact that $B$ is self-adjoint with entries in $\R$ yields all eigenvalues are non-positive real numbers. Since the spectral norm is bounded above by the column-sum norm, given an eigenvalue $\lambda$ of $B$, it follows that 
\[
\lvert \lambda \rvert \leq 2 \max \{ \lvert B_{i,i} \rvert \colon i \in \{ 1, \dots, N \} \}. \qedhere
\]
\end{proof}

\begin{proposition}\label{prop:eigenvalue zero}
The operator $\Delta^{\mu}$ has a simple eigenvalue at $\lambda = 0$ where the corresponding eigenfunction is the constant function with value $1$.
\end{proposition}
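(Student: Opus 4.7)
The plan is a straightforward two-part argument: first exhibit the constant function as an eigenfunction with eigenvalue $0$, then show the kernel of $\Delta^\mu$ is one-dimensional by a standard energy argument.

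For the first part, let $\mathbf{1} \in \mathscr{D}_{\mu}^{1}$ denote the function constantly equal to $1$. Applying the explicit formulas
\[
\nabla^{\mu} \mathbf{1}(z_{n}) = \frac{\mathbf{1}(z_{n+1}) - \mathbf{1}(z_{n})}{\alpha_{n}} \quad (1 \leq n \leq N-1), \qquad \nabla^{\mu} \mathbf{1}(z_{N}) = \frac{\mathbf{1}(z_{1}) - \mathbf{1}(z_{N})}{\alpha_{N}},
\]
every coordinate vanishes, so $\nabla^{\mu} \mathbf{1} = 0$ in $L^{2}_{\mu}$. From $\Delta^{\mu} = -(\nabla^{\mu})^{*} \circ \nabla^{\mu}$, I would then immediately conclude $\Delta^{\mu} \mathbf{1} = 0$, so $\lambda = 0$ is indeed an eigenvalue with $\mathbf{1}$ in the corresponding eigenspace.

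For simplicity, suppose $f \in \mathscr{D}_{\mu}^{2}$ satisfies $\Delta^{\mu} f = 0$. Using \eqref{eq:energy_identity} with $g = f$ gives
\[
0 = \langle \Delta^{\mu} f, f \rangle = -\langle \nabla^{\mu} f, \nabla^{\mu} f \rangle = -\lVert \nabla^{\mu} f \rVert^{2},
\]
so $\nabla^{\mu} f = 0$ in $L^{2}_{\mu}$. Reading off the same component formulas as above, $(\nabla^{\mu} f)(z_{n}) = 0$ together with $\alpha_{n} > 0$ forces $f(z_{1}) = f(z_{2}) = \cdots = f(z_{N})$, i.e.\ $f$ is a scalar multiple of $\mathbf{1}$ as an element of $L^{2}_{\mu}$. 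Hence the $0$-eigenspace is the one-dimensional subspace spanned by $\mathbf{1}$, which is exactly the assertion of simplicity.

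There is no real obstacle here: the argument is entirely driven by the already established identity $\Delta^{\mu} = -(\nabla^{\mu})^{*} \circ \nabla^{\mu}$ together with the injectivity of $\nabla^{\mu}$ on functions modulo constants, the latter being transparent from the bidiagonal-plus-corner shape of $A$. The only subtle point worth a line in the write-up is that $\nabla^{\mu} f = 0$ must be interpreted in $L^{2}_{\mu}$, i.e.\ as the vector equality $(\nabla^{\mu} f(z_{1}), \dots, \nabla^{\mu} f(z_{N})) = 0$, which is why the reduction to equality of the values $f(z_{i})$ is valid and suffices to conclude constancy in $L^{2}_{\mu}$.
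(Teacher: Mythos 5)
Your proof is correct, and it reaches the conclusion by a somewhat different route than the paper. The paper checks $B(1,\dots,1)^{\top}=(0,\dots,0)^{\top}$ by direct computation and then obtains simplicity from linear algebra on the matrices: a row reduction shows $\rk(A)=N-1$, and the identity $\rk(B)=\rk(A^{\top}A)=\rk(A)$ yields a one-dimensional kernel. You instead stay at the level of the operators: $\nabla^{\mu}\mathbf{1}=0$ produces the eigenfunction, and the energy identity \eqref{eq:energy_identity} with $g=f$ shows that $\Delta^{\mu}f=0$ forces $\lVert\nabla^{\mu}f\rVert^{2}=0$, hence $f$ constant. The two arguments coincide in substance---your energy step is precisely the standard proof of the rank identity $\rk(A^{\top}A)=\rk(A)$ that the paper invokes---but your version never needs the matrix $B$, identifies $\ker\Delta^{\mu}$ explicitly as the constants rather than merely counting its dimension, and would survive unchanged in settings where no convenient matrix representation is at hand. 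The only points worth a line in a final write-up are that the constant function indeed lies in $\mathscr{D}_{\mu}^{2}$ (immediate, since $\E(\mathbf{1},g)=0=-\langle 0,g\rangle$ for all $g\in\mathscr{D}_{\mu}^{1}$) and that the energy identity may be applied with $g=f$ because $\mathscr{D}_{\mu}^{2}\subseteq\mathscr{D}_{\mu}^{1}$; both are trivial here.
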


\begin{proof}
A direct calculation reveals $B(1,\dots,1)^{\top}=(0,\dots,0)^{\top}$, and hence, we have that \mbox{$\lambda = 0$} is an eigenvalue of $\Delta^{\mu}$ where the constant function with value $1$ is the corresponding eigenfunction. A row reduced echelon form of $A$ is an upper triangular matrix with a single zero on the diagonal, and so $\rk(A)=N-1$. Combining this with the fact that $\rk(B)=\rk(A^{\top}A)=\rk(A)$ it follows that the eigenvalue $\lambda = 0$ of $B$ is simple.
\end{proof}

We observe, for $N=2$, that the eigenvalues of $\Delta^{\mu}$ are $\lambda_{0} = 0$ and $\lambda_{1} = -2 (\alpha_{1}^{-2} + \alpha_{2}^{-2})$ with corresponding eigenfunctions,
\begin{align*}
f_{0}(x) = 1, \; \text{for all $x \in [0,1]$,}\quad \text{and} \quad f_{1} (x) = \begin{cases} \phantom{-}1 \; &\text{for} \; x\in [0,z_{1}] \cup (z_{2},1], \\
-1 \; & \text{otherwise.}
\end{cases}
\end{align*}
This follows since $\lambda_0$ and $\lambda_{1}$ are eigenvalues of the matrix $B$, given in \eqref{eq:matrix_N=2}, with corresponding eigenvectors $v^{(0)}=  (1,1)^{\top}$ and $v^{(1)}= (1,-1)^{\top}$, respectively. In particular, we see that, in this case, the lower bound in Theorem \ref{prop:lower_bound} is sharp. 
 
Different to the case of continuous distributions, the eigenfunctions for distributions with finite support are not necessarily of the form 
\begin{align*}
f_{\kappa}^{\mu}(x)=\sin(\pi \kappa F_{\mu}(x))
\quad \text{or} \quad 
g_{\kappa}^{\mu}(x)=\cos(\pi \kappa F_{\mu}(x)),
\end{align*}
for $x\in[0,1]$ and $\kappa \in \R \setminus \{ 0 \}$, where $F_{\mu}$ denotes the distribution function of $\mu$, which we now address in the following paragraph.

Let $m_{1}$ and $m_{2}$ denote two positive real numbers with $3 m_{1} + 3 m_{2} = 1$, and set $r = m_{2}/m_{1}$.  Consider the discrete distribution $\mu = \sum_{i=1}^{6} \alpha_{i} \delta_{z_i}$, where $0 < z_{1} < z_{2} < \cdots < z_{6} < 1$, $\alpha_{1} = \alpha_{3} = \alpha_{5} = m_{1}$ and $\alpha_{2} = \alpha_{4} = \alpha_{6} = m_{2}$.  A direct calculation shows the eigenvalues for the matrix representation of $\Delta^{\mu}$ are 
\begin{align*}
\lambda_{0} &= 0, & \lambda_{1} = \lambda_{5} &= - (m_1^{-2} + m_2^{-2}) + \sqrt{m_1^{-4} + m_2^{-4} - m_1^{-2}m_2^{-2}},\\
\lambda_{3} &=-2(m_{1}^{-2} + m_{2}^{-2}), & \lambda_{2} = \lambda_{4} &= - (m_1^{-2} + m_2^{-2}) - \sqrt{m_1^{-4} +m_2^{-4} - m_1^{-2}m_2^{-2}},
\end{align*}
with corresponding eigenvectors
\begin{align*}
v^{(0)} &= \left( 1, 1, 1, 1, 1, 1 \right)^{\top}\hspace{-0.5em},\\
v^{(1)} &= \left( r^2, \sqrt{1 + r^4 - r^2}, 1- r^2, - \sqrt{1 + r^4 - r^2},  -1, 0 \right)^{\top}\hspace{-0.5em},\\
v^{(2)} &= \left( r^2, -\sqrt{1 + r^4 - r^2}, 1-r^2, \sqrt{1 + r^4 - r^2}, -1, 0 \right)^{\top}\hspace{-0.5em},\\
v^{(3)} &= \left(1, -1, 1, -1, 1, -1\right)^{\top}\hspace{-0.5em},\\
v^{(4)} &= \left(\sqrt{1 + r^4 - r^2}, 1 - r^2, \: -\sqrt{1 + r^4 - r^2}, r^2, 0, -1\right)^{\top}\hspace{-0.5em},\\
v^{(5)} &= \left(\sqrt{1 + r^4 - r^2}, r^2-1, \: -\sqrt{1 + r^4 - r^2}, -r^2, 0, 1\right)^{\top}\hspace{-0.5em}.
\end{align*}
For the eigenvalues with multiplicity two, namely $\lambda_{1} = \lambda_{5}$ and $\lambda_{2} =\lambda_{4}$, notice that the sets of tuples
\begin{align*}
S_{1, 5} \coloneqq \left\{ \left( v^{(1)}_{1}, v^{(5)}_{1} \right), \dots, \left(v^{(1)}_{6}, v^{(5)}_{6}\right) \right\}
\quad \text{and} \quad
S_{2,4} \coloneqq \left\{ \left(v^{(2)}_{1}, v^{(4)}_{1}\right), \dots, \left(v^{(2)}_{6}, v^{(4)}_{6}\right) \right\}
\end{align*}
determine the same ellipse, namely,
\begin{align*}
\sqrt{1+r^{-4} - r^{-2}} \left(x^2+y^2-1\right) = \left( 2-r^{-2} \right)xy,
\end{align*}
which is non-axisymmetric; see Figure \ref{fig:Ellipses} for an example and compare with Theorem \ref{prop:ev_all_equal} and Corollary \ref{prop:ev_and_ef_of_Delta_all_equal}, where the analogous set of tuples lie on the unit circle. This latter property demonstrates that the eigenspace for $\lambda_{1} = \lambda_{5}$  (or $\lambda_{2} =\lambda_{4}$) is not spanned by $\{ f_{\kappa_{1}}^{\mu}, g_{\kappa_{2}}^{\mu} \}$ for any $\kappa_{1}, \kappa_{2} \in \R$.  Moreover, in this explicit case, setting 
\begin{align*}
w^{(\kappa)} \coloneqq \left( \sin(\pi \kappa F_{\mu}(z_{1}) ), \dots, \sin( \pi \kappa F_{\mu}(z_{6}) ) \right)^{\top}\hspace{-0.5em},
\quad
u^{(\kappa)} \coloneqq \left( \cos(\pi \kappa F_{\mu}(z_{1})), \dots, \cos(\pi \kappa F_{\mu}(z_{6}) ) \right)^{\top}\hspace{-0.5em},
\end{align*}
a direct calculation shows $B w^{(\kappa)} \neq \lambda_{i} w^{(\kappa)}$ and $B u^{(\kappa)} \neq \lambda_{i} u^{(\kappa)}$, for all $\kappa \in \R \setminus \{ 0 \}$ and all \mbox{$i \in \{ 0, \dots, 5 \}$}.  This latter result also holds when replacing $F_{\mu}(z_{i})$ by $\widetilde{F}_{\mu}(z_{i}) \coloneqq F_{\mu}(z_{i} - \varepsilon)$, for a fixed $\varepsilon \in ( 0, \min\{ z_{1}, \min \{ z_{i + 1} - z_{i} \colon i \in \{ 1, \dots, 5\} \}  \} )$.  For further details and examples, we refer the reader to \cite{We18}.

For the case $m_{1} = 1/4$ and $m_{2} = 1/12$, in Figure \ref{fig:Counterexapmle} the corresponding eigenfunctions of the operator $\Delta^{\mu}$ are sketched and Figure \ref{fig:Ellipses} illustrates the point plots of $S_{1, 5}$ and $S_{2, 4}$ together with the corresponding ellipse.

\begin{figure}[ht!]
\centering
\resizebox{0.85\linewidth}{!}{%
\begin{tikzpicture}
  \draw[->] (-0.1,0) -- (4,0);
  \draw[->] (0,-2.75) -- (0,2.75);
  \draw[thick,smooth] (0.25,-0.05) -- (0.25,0.05) node[below] {$z_{1}$};  
  \draw[thick,smooth] (1,-0.05) -- (1,0.05) node[below] {$z_{2}$};  
  \draw[thick,smooth] (1.4,-0.05) -- (1.4,0.05) node[below] {$z_{3}$};  
  \draw[thick,smooth] (2,-0.05) -- (2,0.05) node[below] {$z_{4}$};  
  \draw[thick,smooth] (2.75,-0.05) -- (2.75,0.05) node[below] {$z_{5}$};  
  \draw[thick,smooth] (3.25,-0.05) -- (3.25,0.05) node[below] {$z_{6}$};  
  \draw[thick,smooth] (3.5,-0.05) -- (3.5,0.05) node[below] {$1$};   
    \draw[thick,smooth] (-0.05,2) -- (0.05,2) node[left] {$1$};  
    \draw[thick,smooth] (-0.05,-2) -- (0.05,-2) node[left] {$-1$}; 
  \draw[thick,domain=0:0.75,smooth,variable=\w] plot ({\w},2);
  \draw[thick,domain=0.75:1.25,smooth,variable=\x]  plot ({\x},2);
  \draw[thick,domain=1.25:2.5,smooth,variable=\y]  plot ({\y},2);
  \draw[thick,domain=2.5:3.5,smooth,variable=\z]  plot ({\z},2);
%
  \draw[dashed,smooth] (0.25,0) -- (0.25,2);
  \draw[dashed,smooth] (1,0) -- (1,2);
  \draw[dashed,smooth] (1.4,0) -- (1.4,2);
  \draw[dashed,smooth] (2,0) -- (2,2);
  \draw[dashed,smooth] (2.75,0) -- (2.75,2);
  \draw[dashed,smooth] (3.25,0) -- (3.25,2);
  \filldraw (0.25,2) circle (1.4pt);
  \filldraw (1,2) circle (1.4pt);
  \filldraw (1.4,2) circle (1.4pt);
  \filldraw (2,2) circle (1.4pt);
  \filldraw (2.75,2) circle (1.4pt);
  \filldraw (3.25,2) circle (1.4pt);
\end{tikzpicture}
\qquad
\begin{tikzpicture}
  \draw[->] (-0.1,0) -- (4,0);
  \draw[->] (0,-2.75) -- (0,2.75);
  \draw[thick,smooth] (0.25,-0.05) -- (0.25,0.05) node[below] {$z_{1}$};  
  \draw[thick,smooth] (1,-0.05) -- (1,0.05) node[below] {$z_{2}$};  
  \draw[thick,smooth] (1.4,-0.05) -- (1.4,0.05) node[below] {$z_{3}$};  
  \draw[thick,smooth] (2,-0.05) -- (2,0.05) node[below] {$z_{4}$};  
  \draw[thick,smooth] (2.75,-0.05) -- (2.75,0.05) node[below] {$z_{5}$};  
  \draw[thick,smooth] (3.25,-0.05) -- (3.25,0.05) node[below] {$z_{6}$};  
  \draw[thick,smooth] (3.5,-0.05) -- (3.5,0.05) node[below] {$1$};   
    \draw[thick,smooth] (-0.05,2) -- (0.05,2) node[left] {$1$};  
    \draw[thick,smooth] (-0.05,-2) -- (0.05,-2) node[left] {$-1$}; 
  \draw[thick,domain=0:0.25,smooth,variable=\a] plot ({\a},0.222);
  \draw[thick,domain=0.31:1,smooth,variable=\b]  plot ({\b},1.898);
  \draw[thick,domain=1.06:1.4,smooth,variable=\c]  plot ({\c},1.777);
  \draw[thick,domain=1.46:2,smooth,variable=\d]  plot ({\d},-1.898);
  \draw[thick,domain=2.06:2.75,smooth,variable=\e]  plot ({\e},-2);
  \draw[thick,domain=2.81:3.25,smooth,variable=\f]  plot ({\f},0);
  \draw[thick,domain=3.31:3.5,smooth,variable=\g]  plot ({\g},0.222);
  \filldraw (0.25,0.222) circle (1.4pt);
  \draw (0.25,1.898) circle (1.4pt);
  \filldraw (1,1.898) circle (1.4pt);
  \draw (1,1.777) circle (1.4pt);
  \filldraw (1.4,1.777) circle (1.4pt);
  \draw (1.4,-1.898) circle (1.4pt);
  \filldraw (2,-1.898) circle (1.4pt);
  \draw (2,-2) circle (1.4pt);
  \filldraw (2.75,-2) circle (1.4pt);
  \draw (2.75,0) circle (1.4pt);
  \filldraw (3.25,0) circle (1.4pt);
  \draw (3.25,0.222) circle (1.4pt);
  \draw[dashed,smooth] (0.25,0) -- (0.25,1.838);
  \draw[dashed,smooth] (0.25,0.162) -- (0.25,0);  
  \draw[dashed,smooth] (1,0) -- (1,1.717);
  \draw[dashed,smooth] (1.4,1.717) -- (1.4,0);
  \draw[dashed,smooth] (1.4,-0.35) -- (1.4,-1.717);
  \draw[dashed,smooth] (2,-0.35) -- (2,-1.717);
  \draw[dashed,smooth] (2.75,-0.35) -- (2.75,-1.94);
  \draw[dashed,smooth] (3.25,0) -- (3.25,0.162);
\end{tikzpicture}
\qquad
\begin{tikzpicture}
  \draw[->] (-0.1,0) -- (4,0); 
  \draw[->] (0,-2.75) -- (0,2.75);
  \draw[thick,smooth] (0.25,-0.05) -- (0.25,0.05) node[below] {$z_{1}$};  
  \draw[thick,smooth] (1,-0.05) -- (1,0.05) node[below] {$z_{2}$};  
  \draw[thick,smooth] (1.4,-0.05) -- (1.4,0.05) node[below] {$z_{3}$};  
  \draw[thick,smooth] (2,-0.05) -- (2,0.05) node[below] {$z_{4}$};  
  \draw[thick,smooth] (2.75,-0.05) -- (2.75,0.05) node[below] {$z_{5}$};  
  \draw[thick,smooth] (3.25,-0.05) -- (3.25,0.05) node[below] {$z_{6}$};  
  \draw[thick,smooth] (3.5,-0.05) -- (3.5,0.05) node[below] {$1$};   
    \draw[thick,smooth] (-0.05,2) -- (0.05,2) node[left] {$1$};  
    \draw[thick,smooth] (-0.05,-2) -- (0.05,-2) node[left] {$-1$}; 
  \draw[thick,domain=0:0.25,smooth,variable=\a] plot ({\a},0.222);
  \draw[thick,domain=0.31:1,smooth,variable=\b]  plot ({\b},-1.898);
  \draw[thick,domain=1.06:1.4,smooth,variable=\c]  plot ({\c},1.777);
  \draw[thick,domain=1.46:2,smooth,variable=\d]  plot ({\d},1.898);
  \draw[thick,domain=2.06:2.75,smooth,variable=\e]  plot ({\e},-2);
  \draw[thick,domain=2.81:3.25,smooth,variable=\f]  plot ({\f},0);
  \draw[thick,domain=3.31:3.5,smooth,variable=\g]  plot ({\g},0.222);
  \filldraw (0.25,0.222) circle (1.4pt);
  \draw (0.25,-1.898) circle (1.4pt);
  \filldraw (1,-1.898) circle (1.4pt);
  \draw (1,1.777) circle (1.4pt);
  \filldraw (1.4,1.777) circle (1.4pt);
  \draw (1.4,1.898) circle (1.4pt);
  \filldraw (2,1.898) circle (1.4pt);
  \draw (2,-2) circle (1.4pt);
  \filldraw (2.75,-2) circle (1.4pt);
  \draw (2.75,0) circle (1.4pt);
  \filldraw (3.25,0) circle (1.4pt);
  \draw (3.25,0.222) circle (1.4pt);
  \draw[dashed,smooth] (0.25,0.162) -- (0.25,0);
  \draw[dashed,smooth] (0.25,-0.35) -- (0.25,-1.838);  
  \draw[dashed,smooth] (1,-0.35) -- (1,-1.838);
  \draw[dashed,smooth] (1,0) -- (1,1.717);
  \draw[dashed,smooth] (1.4,1.717) -- (1.4,0);
  \draw[dashed,smooth] (2,0) -- (2,1.838);
  \draw[dashed,smooth] (2,-0.35) -- (2,-1.94);
  \draw[dashed,smooth] (2.75,-0.35) -- (2.75,-1.94);
  \draw[dashed,smooth] (3.25,0) -- (3.25,0.162);
   \end{tikzpicture}}
\end{figure}

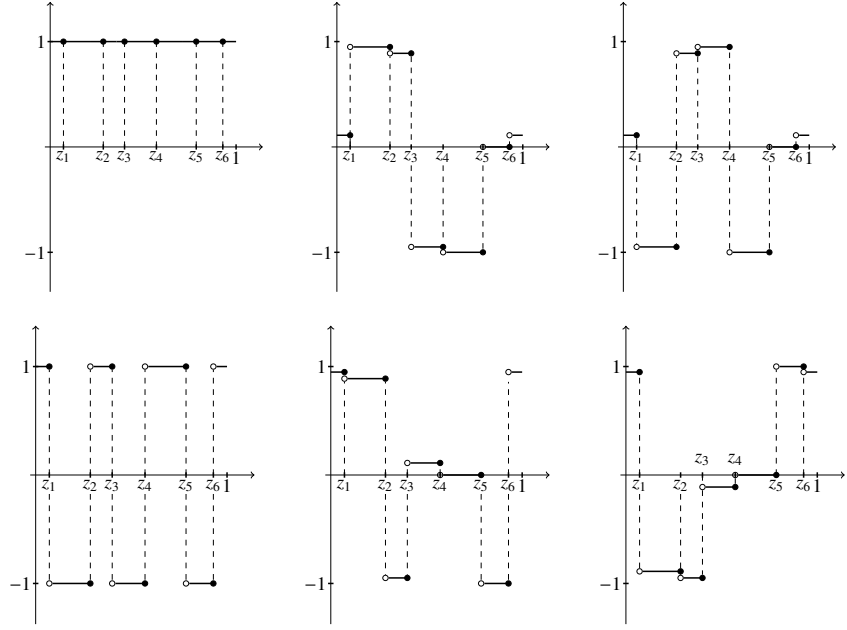
\begin{figure}[ht!]
\centering
\resizebox{0.875\linewidth}{!}{%
\begin{tikzpicture}
  \draw[->] (-0.1,0) -- (4,0);
  \draw[->] (0,-2.75) -- (0,2.75);
  \draw[thick,smooth] (0.25,-0.05) -- (0.25,0.05) node[below] {$z_{1}$};  
  \draw[thick,smooth] (1,-0.05) -- (1,0.05) node[below] {$z_{2}$};  
  \draw[thick,smooth] (1.4,-0.05) -- (1.4,0.05) node[below] {$z_{3}$};  
  \draw[thick,smooth] (2,-0.05) -- (2,0.05) node[below] {$z_{4}$};  
  \draw[thick,smooth] (2.75,-0.05) -- (2.75,0.05) node[below] {$z_{5}$};  
  \draw[thick,smooth] (3.25,-0.05) -- (3.25,0.05) node[below] {$z_{6}$};  
  \draw[thick,smooth] (3.5,-0.05) -- (3.5,0.05) node[below] {$1$};   
    \draw[thick,smooth] (-0.05,2) -- (0.05,2) node[left] {$1$};  
    \draw[thick,smooth] (-0.05,-2) -- (0.05,-2) node[left] {$-1$};  
  \draw[thick,domain=0:0.25,smooth,variable=\a] plot ({\a},2);
  \draw[thick,domain=0.31:1,smooth,variable=\b]  plot ({\b},-2);
  \draw[thick,domain=1.06:1.4,smooth,variable=\c]  plot ({\c},2);
  \draw[thick,domain=1.46:2,smooth,variable=\d]  plot ({\d},-2);
  \draw[thick,domain=2.06:2.75,smooth,variable=\e]  plot ({\e},2);
  \draw[thick,domain=2.81:3.25,smooth,variable=\f]  plot ({\f},-2);
  \draw[thick,domain=3.31:3.5,smooth,variable=\g]  plot ({\g},2);
  \filldraw (0.25,2) circle (1.4pt);
  \draw (0.25,-2) circle (1.4pt);
  \filldraw (1,-2) circle (1.4pt);
  \draw (1,2) circle (1.4pt);
  \filldraw (1.4,2) circle (1.4pt);
  \draw (1.4,-2) circle (1.4pt);
  \filldraw (2,-2) circle (1.4pt);
  \draw (2,2) circle (1.4pt);
  \filldraw (2.75,2) circle (1.4pt);
  \draw (2.75,-2) circle (1.4pt);
  \filldraw (3.25,-2) circle (1.4pt);
  \draw (3.25,2) circle (1.4pt);
  \draw[dashed,smooth] (0.25,0) -- (0.25,1.94);
  \draw[dashed,smooth] (0.25,-0.35) -- (0.25,-1.94);
  \draw[dashed,smooth] (1,0) -- (1,1.94);
  \draw[dashed,smooth] (1,-0.35) -- (1,-1.94);
  \draw[dashed,smooth] (1.4,0) -- (1.4,1.94);
  \draw[dashed,smooth] (1.4,-0.35) -- (1.4,-1.94);
  \draw[dashed,smooth] (2,0) -- (2,1.94);
  \draw[dashed,smooth] (2,-0.35) -- (2,-1.94);
  \draw[dashed,smooth] (2.75,0) -- (2.75,1.94);
  \draw[dashed,smooth] (2.75,-0.35) -- (2.75,-1.94);
  \draw[dashed,smooth] (3.25,0) -- (3.25,1.94);
  \draw[dashed,smooth] (3.25,-0.35) -- (3.25,-1.94);
\end{tikzpicture}
\qquad
\begin{tikzpicture}
  \draw[->] (-0.1,0) -- (4,0);
  \draw[->] (0,-2.75) -- (0,2.75);
  \draw[thick,smooth] (0.25,-0.05) -- (0.25,0.05) node[below] {$z_{1}$};  
  \draw[thick,smooth] (1,-0.05) -- (1,0.05) node[below] {$z_{2}$};  
  \draw[thick,smooth] (1.4,-0.05) -- (1.4,0.05) node[below] {$z_{3}$};  
  \draw[thick,smooth] (2,-0.05) -- (2,0.05) node[below] {$z_{4}$};  
  \draw[thick,smooth] (2.75,-0.05) -- (2.75,0.05) node[below] {$z_{5}$};  
  \draw[thick,smooth] (3.25,-0.05) -- (3.25,0.05) node[below] {$z_{6}$};  
  \draw[thick,smooth] (3.5,-0.05) -- (3.5,0.05) node[below] {$1$};   
    \draw[thick,smooth] (-0.05,2) -- (0.05,2) node[left] {$1$};  
    \draw[thick,smooth] (-0.05,-2) -- (0.05,-2) node[left] {$-1$}; 
  \draw[thick,domain=0:0.25,smooth,variable=\a] plot ({\a},1.898);
  \draw[thick,domain=0.31:1,smooth,variable=\b]  plot ({\b},1.777);
  \draw[thick,domain=1.06:1.4,smooth,variable=\c]  plot ({\c},-1.898);
  \draw[thick,domain=1.46:2,smooth,variable=\d]  plot ({\d},0.222);
  \draw[thick,domain=2.06:2.75,smooth,variable=\e]  plot ({\e},0);
  \draw[thick,domain=2.81:3.25,smooth,variable=\f]  plot ({\f},-2);
  \draw[thick,domain=3.31:3.5,smooth,variable=\g]  plot ({\g},1.898);
  \filldraw (0.25,1.898) circle (1.4pt);
  \draw (0.25,1.777) circle (1.4pt);
  \filldraw (1,1.777) circle (1.4pt);
  \draw (1,-1.898) circle (1.4pt);
  \filldraw (1.4,-1.898) circle (1.4pt);
  \draw (1.4,0.222) circle (1.4pt);
  \filldraw (2,0.222) circle (1.4pt);
  \draw (2,0) circle (1.4pt);
  \filldraw (2.75,0) circle (1.4pt);
  \draw (2.75,-2) circle (1.4pt);
  \filldraw (3.25,-2) circle (1.4pt);
  \draw (3.25,1.898) circle (1.4pt);
  \draw[dashed,smooth] (0.25,0) -- (0.25,1.717);
  \draw[dashed,smooth] (1,-0.35) -- (1,-1.838);
  \draw[dashed,smooth] (1,0) -- (1,1.717);
  \draw[dashed,smooth] (1.4,-0.35) -- (1.4,-1.838);
  \draw[dashed,smooth] (1.4,0) -- (1.4,0.162);
  \draw[dashed,smooth] (2,0) -- (2,0.162);
  \draw[dashed,smooth] (2.75,-0.35) -- (2.75,-1.94);
  \draw[dashed,smooth] (3.25,-0.35) -- (3.25,-1.94);
  \draw[dashed,smooth] (3.25,0) -- (3.25,1.717);
\end{tikzpicture}
\qquad
\begin{tikzpicture}
  \draw[->] (-0.1,0) -- (4,0);
   \draw[->] (0,-2.75) -- (0,2.75);
  \draw[thick,smooth] (0.25,-0.05) -- (0.25,0.05) node[below] {$z_{1}$};  
  \draw[thick,smooth] (1,-0.05) -- (1,0.05) node[below] {$z_{2}$};  
  \draw[thick,smooth] (1.4,-0.05) -- (1.4,0.05) node[above] {$z_{3}$};  
  \draw[thick,smooth] (2,-0.05) -- (2,0.05) node[above] {$z_{4}$};  
  \draw[thick,smooth] (2.75,-0.05) -- (2.75,0.05) node[below] {$z_{5}$};  
  \draw[thick,smooth] (3.25,-0.05) -- (3.25,0.05) node[below] {$z_{6}$};  
  \draw[thick,smooth] (3.5,-0.05) -- (3.5,0.05) node[below] {$1$};   
    \draw[thick,smooth] (-0.05,2) -- (0.05,2) node[left] {$1$};  
    \draw[thick,smooth] (-0.05,-2) -- (0.05,-2) node[left] {$-1$}; 
  \draw[thick,domain=0:0.25,smooth,variable=\a] plot ({\a},1.898);
  \draw[thick,domain=0.31:1,smooth,variable=\b]  plot ({\b},-1.777);
  \draw[thick,domain=1.06:1.4,smooth,variable=\c]  plot ({\c},-1.898);
  \draw[thick,domain=1.46:2,smooth,variable=\d]  plot ({\d},-0.222);
  \draw[thick,domain=2.06:2.75,smooth,variable=\e]  plot ({\e},0);
  \draw[thick,domain=2.81:3.25,smooth,variable=\f]  plot ({\f},2);
  \draw[thick,domain=3.31:3.5,smooth,variable=\g]  plot ({\g},1.898);
  \filldraw (0.25,1.898) circle (1.4pt);
  \draw (0.25,-1.777) circle (1.4pt);
  \filldraw (1,-1.777) circle (1.4pt);
  \draw (1,-1.898) circle (1.4pt);
  \filldraw (1.4,-1.898) circle (1.4pt);
  \draw (1.4,-0.222) circle (1.4pt);
  \filldraw (2,-0.222) circle (1.4pt);
  \draw (2,0) circle (1.4pt);
  \filldraw (2.75,0) circle (1.4pt);
  \draw (2.75,2) circle (1.4pt);
  \filldraw (3.25,2) circle (1.4pt);
  \draw (3.25,1.898) circle (1.4pt);
  \draw[dashed,smooth] (0.25,0) -- (0.25,1.838);
  \draw[dashed,smooth] (0.25,-0.35) -- (0.25,-1.717);
  \draw[dashed,smooth] (1,-0.35) -- (1,-1.838);
  \draw[dashed,smooth] (1.4,0) -- (1.4,-0.162);
  \draw[dashed,smooth] (1.4,-0.282) -- (1.4,-1.838);
  \draw[dashed,smooth] (2,-0.06) -- (2,-0.162);
  \draw[dashed,smooth] (2.75,0) -- (2.75,1.94);
  \draw[dashed,smooth] (3.25,0) -- (3.25,1.838);
\end{tikzpicture}}
\vspace{0.5em}
\caption{Eigenfunctions $f_{0}$, $f_{1}$, $f_{2}$, $f_{3}$, $f_{4}$ and $f_{5}$ of $\Delta^{\mu}$ for $\mu = \sum_{i=1}^{6} \alpha_{i} \delta_{z_i}$ with $\alpha_{1} = \alpha_{3} = \alpha_{5}= 1/4$ and $\alpha_{2}=\alpha_{4}=\alpha_{6}= 1/12$. Compare with Figure 4.}
\label{fig:Counterexapmle}
\end{figure}

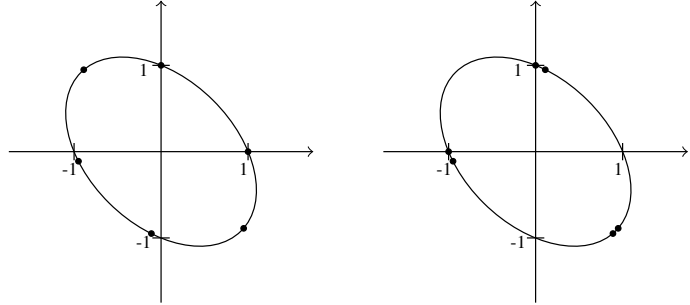
\begin{figure}[ht!]
	\centering
\resizebox{0.715\linewidth}{!}{
\begin{tikzpicture}
  	\draw[->] (-1.75,0) -- (1.75,0);
 	\draw[->] (0,-1.75) -- (0,1.75);
	  \filldraw (-0.9493337495, -0.1111111111) circle (0.9pt);
	  \filldraw (-0.8888888889, 0.9493337495) circle (0.9pt);
	  \filldraw (0.9493337495, -0.8888888889) circle (0.9pt);
  	  \filldraw (-0.1111111111, -0.9493337495) circle (0.9pt);
    	  \filldraw (0,1) circle (0.9pt);
    	  \filldraw (1,0) circle (0.9pt);
	  \draw[rotate=45] (0,0) ellipse (24pt and 37pt);
	\draw[smooth] (1,-0.1) -- (1,0.1);
	\draw[smooth] (-1,-0.1) -- (-1,0.1);
	\draw[smooth] (-0.1,1) -- (0.1,1);
	\draw[smooth] (-0.1,-1) -- (0.1,-1);
	\node at (0.95,-0.2) {\tiny 1};
	\node at (-1.05,-0.2) {\tiny -1};
	\node at (-0.2,0.95) {\tiny 1};
	\node at (-0.2,-1.05) {\tiny -1};
\end{tikzpicture}
\qquad
\begin{tikzpicture}
  	\draw[->] (-1.75,0) -- (1.75,0);
 	\draw[->] (0,-1.75) -- (0,1.75);
	  \filldraw (-0.9493337495, -0.1111111111) circle (0.9pt);
	  \filldraw (0.8888888889, -0.9493337495) circle (0.9pt);
	  \filldraw (0.9493337495, -0.8888888889) circle (0.9pt);
  	  \filldraw (0.1111111111, 0.9493337495) circle (0.9pt);
    	  \filldraw (0,1) circle (0.9pt);
    	  \filldraw (-1,0) circle (0.9pt);
	  \draw[rotate=45] (0,0) ellipse (24pt and 37pt);
	\draw[smooth] (1,-0.1) -- (1,0.1);
	\draw[smooth] (-1,-0.1) -- (-1,0.1);
	\draw[smooth] (-0.1,1) -- (0.1,1);
	\draw[smooth] (-0.1,-1) -- (0.1,-1);
	\node at (0.95,-0.2) {\tiny 1};
	\node at (-1.05,-0.2) {\tiny -1};
	\node at (-0.2,0.95) {\tiny 1};
	\node at (-0.2,-1.05) {\tiny -1};
\end{tikzpicture}}
	\caption{Point plot of $S_{1,5}$ (left) and point plot of $S_{2,4}$ (right) together with the curve given by $\sqrt{73}(x^2+y^2-1) = -7xy$.}
	\label{fig:Ellipses}
\end{figure}

\section{Uniform discrete probability distributions}\label{sec:examples}

Here, we consider the case when $\mu$ is a uniform discrete probability distribution with $N \geq 3$, namely $\alpha_i = N^{-1}$ for all $i \in \{ 1,\dots, N \}$, in which case,
\begin{align*}
B = 
\begin{pmatrix}
-2N^{2} & N^{2} & 0 & \cdots & 0 & 0 & N^{2} \\[0.25em]
N^{2} & -2N^{2} &  N^{2} & \cdots & 0 & 0 & 0\\[0.25em]
0 & N^{2} & -2N^{2} &  \cdots & 0 & 0 & 0\\[0.25em]
\vdots & \vdots & \vdots & \ddots & \vdots & \vdots & \vdots \\[0.25em]
0 & 0 &0 &  \cdots & -2N^{2} & N^{2} & 0\\
0 & 0 & 0 &  \cdots & N^{2} & -2N^{2} &  N^{2}\\[0.25em]
N^{2} & 0 &  0 & \cdots & 0 & N^{2} & -2N^{2}
\end{pmatrix}.
\end{align*}
The following theorem reveals the spectrum of this matrix, and in the case that $N$ is even, the result gives a second example for which the lower bound in Theorem \ref{prop:lower_bound} is sharp.

\begin{theorem}\label{prop:ev_all_equal}
The eigenvalues of the matrix $B$ given directly above are of the form
\begin{align*}
\lambda_l = -2N^{2} + 2N^{2} \cos \left( 2 \pi l / N \right)
\end{align*}
with corresponding eigenvectors
\begin{align*}
v^{(l)}=\bigg(1, \exp\left(2\pi \ii l /N\right),\exp\left(2\pi \ii 2l /N\right),\dots,\exp\left(2\pi \ii (N-1) l /N\right)\bigg)^{\top}\hspace{-0.5em},
\end{align*}
for $l \in \{0,\dots,N-1\}$.
\end{theorem}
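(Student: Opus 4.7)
The plan is to prove the theorem by direct verification, exploiting the circulant structure of $B$. Observe that in the uniform case, $B = N^{2} C$, where $C$ is the $N \times N$ circulant matrix with first row $(-2, 1, 0, \dots, 0, 1)$. Circulant matrices are known to be simultaneously diagonalised by the discrete Fourier basis, so the natural candidates for eigenvectors are precisely the vectors $v^{(l)}$ displayed in the statement, whose entries are $v^{(l)}_{n} = \exp(2\pi \ii (n-1) l / N)$ for $n \in \{1, \dots, N\}$.

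First I would compute the $n$-th entry of $B v^{(l)}$ for an interior index $n \in \{2, \dots, N-1\}$. The only non-zero entries in row $n$ of $B$ are $N^{2}$ in columns $n-1$ and $n+1$, and $-2N^{2}$ in column $n$, so
\begin{align*}
(B v^{(l)})_{n} = N^{2} \bigl( v^{(l)}_{n-1} - 2 v^{(l)}_{n} + v^{(l)}_{n+1} \bigr) = N^{2} v^{(l)}_{n} \bigl( \e^{-2\pi \ii l / N} - 2 + \e^{2\pi \ii l / N} \bigr),
\end{align*}
which simplifies via Euler's formula to $N^{2} v^{(l)}_{n} (2 \cos(2\pi l / N) - 2) = \lambda_{l} v^{(l)}_{n}$.

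Next I would treat the two boundary rows separately. For the first row the non-zero entries are at columns $1$, $2$, and $N$, producing
\begin{align*}
(B v^{(l)})_{1} = N^{2} \bigl( -2 + \e^{2\pi \ii l / N} + \e^{2\pi \ii (N-1) l / N} \bigr) = N^{2} \bigl( -2 + \e^{2\pi \ii l / N} + \e^{-2\pi \ii l / N} \bigr) = \lambda_{l},
\end{align*}
using $\exp(2\pi \ii (N-1) l / N) = \exp(-2 \pi \ii l / N)$; the symmetric argument handles the $N$-th row. This establishes $B v^{(l)} = \lambda_{l} v^{(l)}$ for every $l \in \{0, \dots, N-1\}$.

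Finally, to conclude that these account for the full spectrum, I would note that the $v^{(l)}$ are the columns of the $N \times N$ discrete Fourier transform matrix and are therefore linearly independent over $\mathbb{C}$, so the list $\{ v^{(l)} \}_{l=0}^{N-1}$ forms a basis of $\mathbb{C}^{N}$ and the $\lambda_{l}$ exhaust the spectrum. The argument is essentially routine once the circulant structure is recognised; the only mild subtlety, and the one step requiring care, is the handling of the two corner entries $B_{1,N} = B_{N,1} = N^{2}$, where the cyclic identification $\exp(2\pi \ii (N-1)l/N) = \exp(-2\pi \ii l / N)$ is precisely what makes the Fourier vectors diagonalise $B$ rather than just its tridiagonal truncation.
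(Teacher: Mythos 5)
Your proposal is correct and is essentially the paper's own argument: the paper also verifies that the discrete Fourier vectors $v^{(l)}$ are eigenvectors of the circulant matrix $B$, merely phrasing the row-by-row computation as a system of difference equations with the ansatz $v_k^{(l)} = \exp(2\pi \ii (k-1)l/N)$. Your closing remark that the $v^{(l)}$ are linearly independent and hence exhaust the spectrum is a point the paper leaves implicit, so your write-up is, if anything, slightly more complete.
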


\begin{proof}
Set $m_{1} = -2N^{2}, m_{2} = m_{N} = N^{2}$ and $m_i = 0$, for $i \in \{ 3, \dots, N-1 \}$. The identity $B v = \lambda v$ can be formulated as a system of $N$ difference equations of the form
\begin{align}\label{eq:difference_eq}
\sum_{k=1}^{N-j} m_k v_{k+j} + \sum_{k=N-j+1}^{N} m_k v_{k-N+j} = \lambda v_{j+1},
\end{align}
where $j \in \{0,\dots, N-1\}$ and $v=(v_1, \dots, v_{N})$. To obtain the eigenvalue $\lambda_{l}$, we follow the ansatz $v_{k}^{(l)} = \varphi_l^{k-1}$, where $\varphi_{l}^{k} \coloneqq \exp(2\pi \ii k l / N)$, for $k, l \in \{0, \dots, N-1 \}$. Substituting this into \eqref{eq:difference_eq}, and using the facts that $\varphi_l^{-N} = \varphi_l^{0} = 1$ and $\varphi_l^{j} \neq 0$, for all $j,l\in\{ 0,\dots, N-1\}$, we obtain $\lambda_{l} = \sum_{k=1}^{N} m_{k} \varphi_{l}^{k-1}$.  Hence, for $l \in \{0,\dots,N-1\}$, we have $B v^{(l)} = \lambda_l v^{(l)}$ and
\begin{align*}
\lambda_{l} 
&= -2N^{2} + N^{2} \exp(2\pi \ii l /N) + N^{2} \exp(2\pi \ii l (N-1)/N)\\
&= -2N^{2} + 2N^{2} \cos \left( 2 \pi l /N \right).\qedhere
\end{align*}
\end{proof}
\begin{corollary}\label{prop:ev_and_ef_of_Delta_all_equal}
The eigenvalues of the operator $\Delta^{\mu}$ are $\lambda_l = -2N^{2} + 2N^{2} \cos \left( 2 \pi l / N \right)$, for $l \in \{ 0,\dots, N-1\}$, with corresponding eigenfunctions $f_{l} \in \mathscr{D}_{\mu}^{2}$, where
\begin{enumerate}
\item $f_{0}$ is the constant function with value $1$,
\end{enumerate}
and, 	for $j \in \{1,\dots,N-1\}$,
\begin{enumerate}
\setcounter{enumi}{1}
	\item $\displaystyle f_{l}\vert_{[0,z_{1}] \cup (z_{N},1]} = 0$ and $\displaystyle f_{l}\vert_{(z_{j}, z_{j+1}]} =  \operatorname{Im} \left( \exp\left(2\pi \ii j l/  N\right)\right)$, for $0  < l < N/2$, and
	\item $\displaystyle f_{l}\vert_{[0,z_{1}] \cup (z_{N},1]} = 1$ and $\displaystyle f_{l}\vert_{(z_{j}, z_{j+1}]} =  \operatorname{Re} \left( \exp\left(2\pi \ii j l /N\right)\right)$, for $N/2 \leq l \leq N-1$. 
\end{enumerate}
(See Figures \ref{fig:uniform_N=3} and \ref{fig:uniform_N=6}.)
\end{corollary}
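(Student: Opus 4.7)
The plan is to leverage Theorem \ref{prop:ev_all_equal} directly, translating the complex eigenvector data for the matrix $B$ into real-valued eigenfunctions of $\Delta^{\mu}$. Recall from Section \ref{sec:definition_and_properties} that every $f \in \mathscr{D}_{\mu}^{1}$ is determined by the column vector $(f(z_1), \dots, f(z_N))^{\top}$, with $f$ piecewise constant in the sense that $f|_{[0, z_1] \cup (z_N, 1]} = f(z_1)$ and $f|_{(z_j, z_{j+1}]} = f(z_{j+1})$ for $j \in \{ 1, \dots, N - 1\}$. Hence, once $N$ linearly independent real eigenvectors of $B$ are in hand, the corresponding eigenfunctions in $\mathscr{D}_{\mu}^{2}$ are read off immediately by reinterpreting the $k$-th coordinate as the value at $z_k$.

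Starting from Theorem \ref{prop:ev_all_equal}, I would first observe that $B$ has real entries and each eigenvalue $\lambda_l = -2N^{2} + 2N^{2}\cos(2\pi l / N)$ is real; therefore, for every complex eigenvector $v^{(l)}$ of $B$, both $\operatorname{Re}(v^{(l)})$ and $\operatorname{Im}(v^{(l)})$ are real eigenvectors for $\lambda_l$ whenever they are non-zero. Since $\overline{v^{(l)}} = v^{(N-l)}$ and $\lambda_l = \lambda_{N-l}$, for each pair $(l, N-l)$ with $0 < l < N/2$ the vectors $\operatorname{Re}(v^{(l)})$ and $\operatorname{Im}(v^{(l)})$ span a two-dimensional real eigenspace; the cases $l = 0$ and, when $N$ is even, $l = N/2$ each contribute a one-dimensional real eigenspace since $v^{(0)} = (1, \dots, 1)^{\top}$ and $v^{(N/2)} = (1, -1, \dots, 1, -1)^{\top}$ are already real.

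Next, I would match the three cases of the corollary to these real eigenvectors. Part (1) is immediate from $v^{(0)}$. For part (2), with $0 < l < N/2$, the vector $\operatorname{Im}(v^{(l)}) = (0, \sin(2\pi l/N), \dots, \sin(2\pi(N-1)l/N))^{\top}$ has first entry $0$, so the associated eigenfunction vanishes on $[0, z_1] \cup (z_N, 1]$ and equals $\sin(2\pi j l/N) = \operatorname{Im}(\exp(2\pi \ii j l/N))$ on $(z_j, z_{j+1}]$, matching the statement. For part (3), with $N/2 \leq l \leq N - 1$, the vector $\operatorname{Re}(v^{(l)}) = (1, \cos(2\pi l/N), \dots, \cos(2\pi(N-1)l/N))^{\top}$ has first entry $1$ and yields the asserted eigenfunction; when $N$ is even and $l = N/2$ this coincides with $v^{(N/2)}$ itself (whose imaginary part vanishes), while for $N/2 < l \leq N - 1$ it provides the second real eigenvector in the eigenspace for $\lambda_l = \lambda_{N - l}$, paired with the eigenfunction $f_{N-l}$ constructed in part (2).

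The last step is a linear independence check for the $N$ functions produced: the collection consists of one vector from each one-dimensional eigenspace and two distinguished vectors from each two-dimensional one, and is obtained from the discrete Fourier basis $\{v^{(l)}\}_{l = 0}^{N-1}$ of $\mathbb{C}^N$ by a non-singular real change of basis; equivalently, together with the remaining $\operatorname{Im}(v^{(l)})$ for $N/2 < l < N$ it spans $\mathbb{R}^N$, and our selection picks exactly one representative from each eigenspace. I do not foresee a substantial obstacle; the main care required is the bijective bookkeeping between the index $l$ and the choice of real versus imaginary part, and, when $N$ is even, isolating $l = N/2$ inside case (3) since it is the unique index in $\{1, \dots, N-1\}$ for which $v^{(l)}$ is already real.
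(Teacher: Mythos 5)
Your proposal is correct and follows exactly the route the paper intends: the corollary is stated without an explicit proof precisely because it is meant to follow from Theorem \ref{prop:ev_all_equal} by passing to the real and imaginary parts of the complex eigenvectors $v^{(l)}$ (using that $B$ is real with real eigenvalues and $\overline{v^{(l)}}=v^{(N-l)}$, $\lambda_l=\lambda_{N-l}$) and identifying vectors in $\R^N$ with piecewise constant functions in $\mathscr{D}_{\mu}^{2}$. Your bookkeeping of the cases $l=0$, $0<l<N/2$, and $N/2\leq l\leq N-1$, including the special role of $l=N/2$ for even $N$, matches the statement and is complete.
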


In the situation of Corollary~\ref{prop:ev_and_ef_of_Delta_all_equal}, we have for $N$ tending to infinity, that the pure point measures converge weakly to the Lebesgue measure, and indeed, the eigenfunctions of the discrete Laplacians converge uniformly to (appropriately re-scaled) cosine and sine functions.  Moreover,  the eigenvalue approach $-(2\pi k)^2$; the corresponding eigenvalues of the classical weak Laplacian.

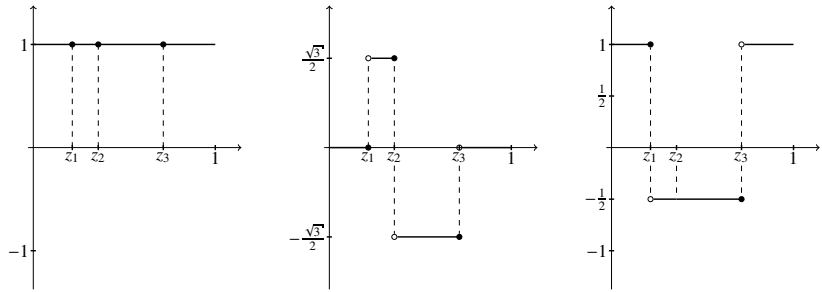
\begin{figure}[ht!]
\centering
\resizebox{0.85\linewidth}{!}{%
\begin{tikzpicture}
  \draw[->] (-0.1,0) -- (4,0); 
  \draw[->] (0,-2.75) -- (0,2.75); 
  \draw[thick,smooth] (0.75,-0.05) -- (0.75,0.05) node[below] {$z_{1}$};  
  \draw[thick,smooth] (1.25,-0.05) -- (1.25,0.05) node[below] {$z_{2}$};  
  \draw[thick,smooth] (2.5,-0.05) -- (2.5,0.05) node[below] {$z_{3}$};  
  \draw[thick,smooth] (3.5,-0.05) -- (3.5,0.05) node[below] {$1$};   
    \draw[thick,smooth] (-0.05,2) -- (0.05,2) node[left] {$1$};  
    \draw[thick,smooth] (-0.05,-2) -- (0.05,-2) node[left] {$-1$}; 
  \draw[thick,domain=0:0.75,smooth,variable=\w] plot ({\w},2);
  \draw[thick,domain=0.75:1.25,smooth,variable=\x]  plot ({\x},2);
  \draw[thick,domain=1.25:2.5,smooth,variable=\y]  plot ({\y},2);
  \draw[thick,domain=2.5:3.5,smooth,variable=\z]  plot ({\z},2);
%
  \draw[dashed,smooth] (0.75,-0) -- (0.75,2);
  \draw[dashed,smooth] (1.25,0) -- (1.25,2);
  \draw[dashed,smooth] (2.5,0) -- (2.5,2);
%
  \filldraw (0.75,2) circle (1.4pt);
  \filldraw (1.25,2) circle (1.4pt);
  \filldraw (2.5,2) circle (1.4pt);
\end{tikzpicture}
\qquad
\begin{tikzpicture}
  \draw[->] (-0.1,0) -- (4,0); 
  \draw[->] (0,-2.75) -- (0,2.75);
  \draw[thick,smooth] (0.75,-0.05) -- (0.75,0.05) node[below] {$z_{1}$};  
  \draw[thick,smooth] (1.25,-0.05) -- (1.25,0.05) node[below] {$z_{2}$};  
  \draw[thick,smooth] (2.5,-0.05) -- (2.5,0.05) node[below] {$z_{3}$};  
  \draw[thick,smooth] (3.5,-0.05) -- (3.5,0.05) node[below] {$1$};   
    \draw[thick,smooth] (-0.05,1.732) -- (0.05,1.732) node[left] {$\frac{\sqrt{3}}{2}$};  
    \draw[thick,smooth] (-0.05,-1.732) -- (0.05,-1.732) node[left] {$-\frac{\sqrt{3}}{2}$}; 
  \draw[thick,domain=0:0.75,smooth,variable=\w] plot ({\w},0);
  \draw[thick,domain=0.81:1.25,smooth,variable=\x]  plot ({\x},1.732);
  \draw[thick,domain=1.31:2.5,smooth,variable=\y]  plot ({\y},-1.732);
  \draw[thick,domain=2.56:3.5,smooth,variable=\z]  plot ({\z},0);
  \filldraw (0.75,0) circle (1.4pt);
  \draw (0.75,1.732) circle (1.4pt);
  \filldraw (1.25,1.732) circle (1.4pt);
  \draw (1.25,-1.732) circle (1.4pt);
  \filldraw (2.5,-1.732) circle (1.4pt);
  \draw (2.5,0) circle (1.4pt);
  \draw[dashed,smooth] (0.75,0) -- (0.75,1.672);
  \draw[dashed,smooth] (1.25,-0.35) -- (1.25,-1.672);
  \draw[dashed,smooth] (1.25,0) -- (1.25,1.672);
  \draw[dashed,smooth] (2.5,-0.35) -- (2.5,-1.672);
\end{tikzpicture}
\qquad
\begin{tikzpicture}
  \draw[->] (-0.1,0) -- (4,0);
  \draw[->] (0,-2.75) -- (0,2.75);
  \draw[thick,smooth] (0.75,-0.05) -- (0.75,0.05) node[below] {$z_{1}$};  
  \draw[thick,smooth] (1.25,-0.05) -- (1.25,0.05) node[below] {$z_{2}$};  
  \draw[thick,smooth] (2.5,-0.05) -- (2.5,0.05) node[below] {$z_{3}$};  
  \draw[thick,smooth] (3.5,-0.05) -- (3.5,0.05) node[below] {$1$};   
    \draw[thick,smooth] (-0.05,2) -- (0.05,2) node[left] {$1$};  
    \draw[thick,smooth] (-0.05,-2) -- (0.05,-2) node[left] {$-1$};  
    \draw[thick,smooth] (-0.05,1) -- (0.05,1) node[left] {$\frac{1}{2}$}; 
    \draw[thick,smooth] (-0.05,-1) -- (0.05,-1) node[left] {$-\frac{1}{2}$}; 
  \draw[thick,domain=0:0.75,smooth,variable=\w] plot ({\w},2);
  \draw[thick,domain=0.81:1.25,smooth,variable=\x]  plot ({\x},-1);
  \draw[thick,domain=1.25:2.5,smooth,variable=\y]  plot ({\y},-1);
  \draw[thick,domain=2.56:3.5,smooth,variable=\z]  plot ({\z},2);
  \filldraw (0.75,2) circle (1.4pt);
  \draw (0.75,-1) circle (1.4pt);
  \filldraw (2.5,-1) circle (1.4pt);
  \draw (2.5,2) circle (1.4pt);
  \draw[dashed,smooth] (0.75,-0.35) -- (0.75,-0.94);
  \draw[dashed,smooth] (0.75,0) -- (0.75,1.94);
  \draw[dashed,smooth] (2.5,-0.35) -- (2.5,-0.94);
  \draw[dashed,smooth] (2.5,0) -- (2.5,1.94);
  \draw[dashed,smooth] (1.25,-0.35) -- (1.25,-0.94);
\end{tikzpicture}}
\vspace{0.5em}
\caption{Eigenfunctions $f_0$, $f_1$ and $f_2$ of $\Delta^{\mu}$ with corresponding eigenvalues $\lambda_{0} = 0$, $\lambda_{1} = -27$ and $\lambda_{2} = -27$, for $\mu$ a uniform discrete probability distribution with $N = 3$.
\label{fig:uniform_N=3}
}
\end{figure}
\begin{figure}[ht!]
\centering
\resizebox{0.85\linewidth}{!}{%
\begin{tikzpicture}
  \draw[->] (-0.1,0) -- (4,0);
  \draw[->] (0,-2.75) -- (0,2.75);
  \draw[thick,smooth] (0.25,-0.05) -- (0.25,0.05) node[below] {$z_{1}$};  
  \draw[thick,smooth] (1,-0.05) -- (1,0.05) node[below] {$z_{2}$};  
  \draw[thick,smooth] (1.4,-0.05) -- (1.4,0.05) node[below] {$z_{3}$};  
  \draw[thick,smooth] (2,-0.05) -- (2,0.05) node[below] {$z_{4}$};  
  \draw[thick,smooth] (2.75,-0.05) -- (2.75,0.05) node[below] {$z_{5}$};  
  \draw[thick,smooth] (3.25,-0.05) -- (3.25,0.05) node[below] {$z_{6}$};  
  \draw[thick,smooth] (3.5,-0.05) -- (3.5,0.05) node[below] {$1$};   
    \draw[thick,smooth] (-0.05,2) -- (0.05,2) node[left] {$1$};  
    \draw[thick,smooth] (-0.05,-2) -- (0.05,-2) node[left] {$-1$}; 
  \draw[thick,domain=0:0.75,smooth,variable=\w] plot ({\w},2);
  \draw[thick,domain=0.75:1.25,smooth,variable=\x]  plot ({\x},2);
  \draw[thick,domain=1.25:2.5,smooth,variable=\y]  plot ({\y},2);
  \draw[thick,domain=2.5:3.5,smooth,variable=\z]  plot ({\z},2);
%
  \draw[dashed,smooth] (0.25,-0) -- (0.25,2);  
  \draw[dashed,smooth] (1,0) -- (1,2);
  \draw[dashed,smooth] (1.4,0) -- (1.4,2);  
  \draw[dashed,smooth] (2,0) -- (2,2);
  \draw[dashed,smooth] (2.75,0) -- (2.75,2);
  \draw[dashed,smooth] (3.25,0) -- (3.25,2);
%
  \filldraw (0.25,2) circle (1.4pt);
  \filldraw (1,2) circle (1.4pt);
  \filldraw (1.4,2) circle (1.4pt);
  \filldraw (2,2) circle (1.4pt);
  \filldraw (2.75,2) circle (1.4pt);
  \filldraw (3.25,2) circle (1.4pt);
\end{tikzpicture}
\qquad
\begin{tikzpicture}
  \draw[->] (-0.1,0) -- (4,0); 
  \draw[->] (0,-2.75) -- (0,2.75);
  \draw[thick,smooth] (0.25,-0.05) -- (0.25,0.05) node[below] {$z_{1}$};  
  \draw[thick,smooth] (1,-0.05) -- (1,0.05) node[below] {$z_{2}$};  
  \draw[thick,smooth] (1.4,-0.05) -- (1.4,0.05) node[below] {$z_{3}$};  
  \draw[thick,smooth] (2,-0.05) -- (2,0.05) node[below] {$z_{4}$};  
  \draw[thick,smooth] (2.75,-0.05) -- (2.75,0.05) node[below] {$z_{5}$};  
  \draw[thick,smooth] (3.25,-0.05) -- (3.25,0.05) node[below] {$z_{6}$};  
  \draw[thick,smooth] (3.5,-0.05) -- (3.5,0.05) node[below] {$1$};  
    \draw[thick,smooth] (-0.05,-1.732) -- (0.05,-1.732) node[left] {$-\frac{\sqrt{3}}{2}$};  
    \draw[thick,smooth] (-0.05,1.732) -- (0.05,1.732) node[left] {$\frac{\sqrt{3}}{2}$}; 
  \draw[thick,domain=0:0.25,smooth,variable=\a] plot ({\a},0);
  \draw[thick,domain=0.32:1,smooth,variable=\b]  plot ({\b},1.732);
  \draw[thick,domain=1:1.4,smooth,variable=\c]  plot ({\c},1.732);
  \draw[thick,domain=1.46:2,smooth,variable=\d]  plot ({\d},0);
  \draw[thick,domain=2.06:2.75,smooth,variable=\e]  plot ({\e},-1.732);
  \draw[thick,domain=2.75:3.25,smooth,variable=\f]  plot ({\f},-1.732);
  \draw[thick,domain=3.29:3.5,smooth,variable=\g]  plot ({\g},0);
  \filldraw (0.25,0) circle (1.4pt);
  \draw (0.25,1.732) circle (1.4pt);
  \filldraw (1.4,1.732) circle (1.4pt);
  \draw (1.4,0) circle (1.4pt);
  \filldraw (2,0) circle (1.4pt);
  \draw (2,-1.732) circle (1.4pt);
  \filldraw (3.25,-1.732) circle (1.4pt);
  \draw (3.25,0) circle (1.4pt);
  \draw[dashed,smooth] (0.25,0) -- (0.25,1.672);
  \draw[dashed,smooth] (1.4,1.672) -- (1.4,0);
  \draw[dashed,smooth] (2,-0.35) -- (2,-1.672);
  \draw[dashed,smooth] (3.25,-0.35) -- (3.25,-1.672);
  \draw[dashed,smooth] (1,0) -- (1,1.672);
  \draw[dashed,smooth] (2.75,-0.35) -- (2.75,-1.672);
\end{tikzpicture}
\qquad
\begin{tikzpicture}
  \draw[->] (-0.1,0) -- (4,0);
  \draw[->] (0,-2.75) -- (0,2.75);
  \draw[thick,smooth] (0.25,-0.05) -- (0.25,0.05) node[below] {$z_{1}$};  
  \draw[thick,smooth] (1,-0.05) -- (1,0.05) node[below] {$z_{2}$};  
  \draw[thick,smooth] (1.4,-0.05) -- (1.4,0.05) node[below] {$z_{3}$};  
  \draw[thick,smooth] (2,-0.05) -- (2,0.05) node[below] {$z_{4}$};  
  \draw[thick,smooth] (2.75,-0.05) -- (2.75,0.05) node[below] {$z_{5}$};  
  \draw[thick,smooth] (3.25,-0.05) -- (3.25,0.05) node[below] {$z_{6}$};  
  \draw[thick,smooth] (3.5,-0.05) -- (3.5,0.05) node[below] {$1$};  
    \draw[thick,smooth] (-0.05,1.732) -- (0.05,1.732) node[left] {$\frac{\sqrt{3}}{2}$};  
    \draw[thick,smooth] (-0.05,-1.732) -- (0.05,-1.732) node[left] {$-\frac{\sqrt{3}}{2}$}; 
  \draw[thick,domain=0:0.25,smooth,variable=\a] plot ({\a},0);
  \draw[thick,domain=0.31:1,smooth,variable=\b]  plot ({\b},1.732);
  \draw[thick,domain=1.06:1.4,smooth,variable=\c]  plot ({\c},-1.732);
  \draw[thick,domain=1.46:2,smooth,variable=\d]  plot ({\d},0);
  \draw[thick,domain=2.06:2.75,smooth,variable=\e]  plot ({\e},1.732);
  \draw[thick,domain=2.81:3.25,smooth,variable=\f]  plot ({\f},-1.732);
  \draw[thick,domain=3.31:3.5,smooth,variable=\g]  plot ({\g},0);
  \filldraw (0.25,0) circle (1.4pt);
  \draw (0.25,1.732) circle (1.4pt);
  \filldraw (1,1.732) circle (1.4pt);
  \draw (1,-1.732) circle (1.4pt);
  \filldraw (1.4,-1.732) circle (1.4pt);
  \draw (1.4,0) circle (1.4pt);
  \filldraw (2,0) circle (1.4pt);
  \draw (2,1.732) circle (1.4pt);
  \filldraw (2.75,1.732) circle (1.4pt);
  \draw (2.75,-1.732) circle (1.4pt);
  \filldraw (3.25,-1.732) circle (1.4pt);
  \draw (3.25,0) circle (1.4pt);
  \draw[dashed,smooth] (0.25,0) -- (0.25,1.672);
  \draw[dashed,smooth] (1,1.672) -- (1,0);
  \draw[dashed,smooth] (1,-0.35) -- (1,-1.672);
  \draw[dashed,smooth] (1.4,-0.35) -- (1.4,-1.672);
  \draw[dashed,smooth] (2,0) -- (2,1.672);
  \draw[dashed,smooth] (2.75,-0.35) -- (2.75,-1.672);
  \draw[dashed,smooth] (2.75,0) -- (2.75,1.672);
  \draw[dashed,smooth] (3.25,-0.35) -- (3.25,-1.672);
\end{tikzpicture}}
\end{figure}

\begin{figure}[ht!]
\centering
\resizebox{0.85\linewidth}{!}{%
\begin{tikzpicture}
  \draw[->] (-0.1,0) -- (4,0);
  \draw[->] (0,-2.75) -- (0,2.75);
  \draw[thick,smooth] (0.25,-0.05) -- (0.25,0.05) node[below] {$z_{1}$};  
  \draw[thick,smooth] (1,-0.05) -- (1,0.05) node[below] {$z_{2}$};  
  \draw[thick,smooth] (1.4,-0.05) -- (1.4,0.05) node[below] {$z_{3}$};  
  \draw[thick,smooth] (2,-0.05) -- (2,0.05) node[below] {$z_{4}$};  
  \draw[thick,smooth] (2.75,-0.05) -- (2.75,0.05) node[below] {$z_{5}$};  
  \draw[thick,smooth] (3.25,-0.05) -- (3.25,0.05) node[below] {$z_{6}$};  
  \draw[thick,smooth] (3.5,-0.05) -- (3.5,0.05) node[below] {$1$};  
    \draw[thick,smooth] (-0.05,2) -- (0.05,2) node[left] {$1$};  
    \draw[thick,smooth] (-0.05,-2) -- (0.05,-2) node[left] {$-1$};  
  \draw[thick,domain=0:0.25,smooth,variable=\a] plot ({\a},2);
  \draw[thick,domain=0.31:1,smooth,variable=\b]  plot ({\b},-2);
  \draw[thick,domain=1.06:1.4,smooth,variable=\c]  plot ({\c},2);
  \draw[thick,domain=1.46:2,smooth,variable=\d]  plot ({\d},-2);
  \draw[thick,domain=2.06:2.75,smooth,variable=\e]  plot ({\e},2);
  \draw[thick,domain=2.81:3.25,smooth,variable=\f]  plot ({\f},-2);
  \draw[thick,domain=3.31:3.5,smooth,variable=\g]  plot ({\g},2);
  \filldraw (0.25,2) circle (1.4pt);
  \draw (0.25,-2) circle (1.4pt);
  \filldraw (1,-2) circle (1.4pt);
  \draw (1,2) circle (1.4pt);
  \filldraw (1.4,2) circle (1.4pt);
  \draw (1.4,-2) circle (1.4pt);
  \filldraw (2,-2) circle (1.4pt);
  \draw (2,2) circle (1.4pt);
  \filldraw (2.75,2) circle (1.4pt);
  \draw (2.75,-2) circle (1.4pt);
  \filldraw (3.25,-2) circle (1.4pt);
  \draw (3.25,2) circle (1.4pt);
  \draw[dashed,smooth] (0.25,-0.35) -- (0.25,-1.94);
  \draw[dashed,smooth] (0.25,0) -- (0.25,1.94);
  \draw[dashed,smooth] (1,-0.35) -- (1,-1.94);
  \draw[dashed,smooth] (1,0) -- (1,1.94);
  \draw[dashed,smooth] (1.4,-0.35) -- (1.4,-1.94);
  \draw[dashed,smooth] (1.4,0) -- (1.4,1.94);
  \draw[dashed,smooth] (2,-0.35) -- (2,-1.94);
  \draw[dashed,smooth] (2,0) -- (2,1.94);
  \draw[dashed,smooth] (2.75,-0.35) -- (2.75,-1.94);
  \draw[dashed,smooth] (2.75,0) -- (2.75,1.94);
  \draw[dashed,smooth] (3.25,-0.35) -- (3.25,-1.94);
  \draw[dashed,smooth] (3.25,0) -- (3.25,1.94);
\end{tikzpicture}
\qquad
\begin{tikzpicture}
  \draw[->] (-0.1,0) -- (4,0);
  \draw[->] (0,-2.75) -- (0,2.75);
  \draw[thick,smooth] (0.25,-0.05) -- (0.25,0.05) node[below] {$z_{1}$};  
  \draw[thick,smooth] (1,-0.05) -- (1,0.05) node[below] {$z_{2}$};  
  \draw[thick,smooth] (1.4,-0.05) -- (1.4,0.05) node[below] {$z_{3}$};  
  \draw[thick,smooth] (2,-0.05) -- (2,0.05) node[below] {$z_{4}$};  
  \draw[thick,smooth] (2.75,-0.05) -- (2.75,0.05) node[below] {$z_{5}$};  
  \draw[thick,smooth] (3.25,-0.05) -- (3.25,0.05) node[below] {$z_{6}$};  
  \draw[thick,smooth] (3.5,-0.05) -- (3.5,0.05) node[below] {$1$};
    \draw[thick,smooth] (-0.05,2) -- (0.05,2) node[left] {$1$};  
    \draw[thick,smooth] (-0.05,-2) -- (0.05,-2) node[left] {$-1$}; 
    \draw[thick,smooth] (-0.05,1) -- (0.05,1) node[left] {$\frac{1}{2}$};  
    \draw[thick,smooth] (-0.05,-1) -- (0.05,-1) node[left] {$-\frac{1}{2}$}; 
   \draw[thick,smooth] (-0.05,-1.732)  (0.05,-1.732) node[left] {\phantom{$-\frac{\sqrt{3}}{2}$}}; 
  \draw[thick,domain=0:0.25,smooth,variable=\a] plot ({\a},2);
  \draw[thick,domain=0.31:1,smooth,variable=\b]  plot ({\b},-1);
  \draw[thick,domain=1:1.4,smooth,variable=\c]  plot ({\c},-1);
  \draw[thick,domain=1.46:2,smooth,variable=\d]  plot ({\d},2);
  \draw[thick,domain=2.06:2.75,smooth,variable=\e]  plot ({\e},-1);
  \draw[thick,domain=2.75:3.25,smooth,variable=\f]  plot ({\f},-1);
  \draw[thick,domain=3.31:3.5,smooth,variable=\g]  plot ({\g},2);
  \filldraw (0.25,2) circle (1.4pt);
  \draw (0.25,-1) circle (1.4pt);
  \filldraw (1.4,-1) circle (1.4pt);
  \draw (1.4,2) circle (1.4pt);
  \filldraw (2,2) circle (1.4pt);
  \draw (2,-1) circle (1.4pt);
  \filldraw (3.25,-1) circle (1.4pt);
  \draw (3.25,2) circle (1.4pt);
  \draw[dashed,smooth] (0.25,0) -- (0.25,1.94);
  \draw[dashed,smooth] (0.25,-0.35) -- (0.25,-0.94);
  \draw[dashed,smooth] (1.4,-0.35) -- (1.4,-0.94);
  \draw[dashed,smooth] (1.4,0) -- (1.4,1.94);
  \draw[dashed,smooth] (2,1.94) -- (2,0);
  \draw[dashed,smooth] (2,-0.35) -- (2,-0.94);
  \draw[dashed,smooth] (3.25,-0.35) -- (3.25,-0.94);
  \draw[dashed,smooth] (3.25,0) -- (3.25,1.94);
  \draw[dashed,smooth] (1,-0.35) -- (1,-0.94);
  \draw[dashed,smooth] (2.75,-0.35) -- (2.75,-0.94);
\end{tikzpicture}
\qquad
\begin{tikzpicture}
  \draw[->] (-0.1,0) -- (4,0);
  \draw[->] (0,-2.75) -- (0,2.75);
  \draw[thick,smooth] (0.25,-0.05) -- (0.25,0.05) node[below] {$z_{1}$};  
  \draw[thick,smooth] (1,-0.05) -- (1,0.05) node[below] {$z_{2}$};  
  \draw[thick,smooth] (1.4,-0.05) -- (1.4,0.05) node[below] {$z_{3}$};  
  \draw[thick,smooth] (2,-0.05) -- (2,0.05) node[below] {$z_{4}$};  
  \draw[thick,smooth] (2.75,-0.05) -- (2.75,0.05) node[below] {$z_{5}$};  
  \draw[thick,smooth] (3.25,-0.05) -- (3.25,0.05) node[below] {$z_{6}$};  
  \draw[thick,smooth] (3.5,-0.05) -- (3.5,0.05) node[below] {$1$};  
    \draw[thick,smooth] (-0.05,2) -- (0.05,2) node[left] {$1$};  
    \draw[thick,smooth] (-0.05,-2) -- (0.05,-2) node[left] {$-1$};  
    \draw[thick,smooth] (-0.05,1) -- (0.05,1) node[left] {$\frac{1}{2}$}; 
    \draw[thick,smooth] (-0.05,-1) -- (0.05,-1) node[left] {$-\frac{1}{2}$}; 
    \draw[thick,smooth] (-0.05,-1.732)  (0.05,-1.732) node[left] {\phantom{$-\frac{\sqrt{3}}{2}$}}; 
  \draw[thick,domain=0:0.25,smooth,variable=\a] plot ({\a},2);
  \draw[thick,domain=0.31:1,smooth,variable=\b]  plot ({\b},1);
  \draw[thick,domain=1.06:1.4,smooth,variable=\c]  plot ({\c},-1);
  \draw[thick,domain=1.46:2,smooth,variable=\d]  plot ({\d},-2);
  \draw[thick,domain=2.06:2.75,smooth,variable=\e]  plot ({\e},-1);
  \draw[thick,domain=2.81:3.25,smooth,variable=\f]  plot ({\f},1);
  \draw[thick,domain=3.31:3.5,smooth,variable=\g]  plot ({\g},2);
  \filldraw (0.25,2) circle (1.4pt);
  \draw (0.25,1) circle (1.4pt);
  \filldraw (1,1) circle (1.4pt);
  \draw (1,-1) circle (1.4pt);
  \filldraw (1.4,-1) circle (1.4pt);
  \draw (1.4,-2) circle (1.4pt);
  \filldraw (2,-2) circle (1.4pt);
  \draw (2,-1) circle (1.4pt);
  \filldraw (2.75,-1) circle (1.4pt);
  \draw (2.75,1) circle (1.4pt);
  \filldraw (3.25,1) circle (1.4pt);
  \draw (3.25,2) circle (1.4pt);
  \draw[dashed,smooth] (0.25,0) -- (0.25,1.94);
  \draw[dashed,smooth] (1,-0.35) -- (1,-0.94);
  \draw[dashed,smooth] (1,0) -- (1,0.94);
  \draw[dashed,smooth] (1.4,-0.35) -- (1.4,-1.94);
  \draw[dashed,smooth] (2,-0.35) -- (2,-0.94);
  \draw[dashed,smooth] (2,-1.06) -- (2,-1.94);
  \draw[dashed,smooth] (2.75,-0.35) -- (2.75,-0.94);
  \draw[dashed,smooth] (2.75,0) -- (2.75,0.94);
  \draw[dashed,smooth] (3.25,0) -- (3.25,1.94);
\end{tikzpicture}}
\caption{Eigenfunctions $f_0$, $f_1$, $f_2$, $f_3$, $f_4$ and $f_5$ of $\Delta^{\mu}$ with corresponding eigenvalues $\lambda_{0} = 0$, $\lambda_{1} = -36$, $\lambda_{2} = -108$, $\lambda_{3} = -144$, $\lambda_{4} = -108$ and \mbox{$\lambda_{5} = -36$}, for $\mu$ a uniform discrete probability distribution with $N = 6$.}
\label{fig:uniform_N=6}
\end{figure}

\bibliographystyle{plain}
\bibliography{bib}

\end{document}